\title[Profinite rigidity of lattices]
 {On the profinite rigidity of lattices in higher rank Lie groups}
 \author[H. Kammeyer]{Holger Kammeyer}
 \author[S. Kionke]{Steffen Kionke}
 \address[H. Kammeyer]{Institute for Algebra and Geometry, Karlsruhe Institute of Technology, 76131 Karlsruhe, Germany}
  \email{holger.kammeyer@kit.edu}
 \address[S. Kionke]{Fakult\"at f\"ur Mathematik und Informatik, FernUniversit\"at in Hagen, 58084 Hagen, Germany}
 \email{steffen.kionke@fernuni-hagen.de}
\subjclass[2010]{22E40, 20E18}
\keywords{profinite rigidity, lattices}
\theoremstyle{plain}
\newtheorem{theorem}{Theorem}
\newtheorem{lemma}[theorem]{Lemma}
\newtheorem{proposition}[theorem]{Proposition}
\theoremstyle{definition}
\newtheorem*{definition*}{Definition}
\newtheorem*{observation*}{Observation}
\numberwithin{equation}{section}
\numberwithin{theorem}{section}
\DeclareMathOperator{\Aut}{Aut}
\DeclareMathOperator{\rank}{rank}
\providecommand{\fg}{\mathfrak{g}}
\providecommand{\bbR}{\mathbb{R}}
\providecommand{\bbQ}{\mathbb{Q}}
\providecommand{\bbH}{\mathbb{H}}
\providecommand{\bbA}{\mathbb{A}}
\providecommand{\bbC}{\mathbb{C}}
\DeclareMathOperator{\SL}{SL}
\DeclareMathOperator{\SU}{SU}
\DeclareMathOperator{\PSL}{PSL}
\DeclareMathOperator{\SO}{SO}
\DeclareMathOperator{\Gal}{Gal}
\providecommand{\ignore}[1]{}
\providecommand{\alg}[1]{\mathbf{#1}}
\providecommand{\R}{\mathbb{R}}
\providecommand{\Q}{\mathbb{Q}}
\providecommand{\C}{\mathbb{C}}
\newcommand*{\arXiv}[1]{ \href{http://www.arxiv.org/abs/#1}{arXiv:\textbf{#1}}}
\begin{document}

\begin{abstract}
We investigate which higher rank simple Lie groups admit profinitely but not abstractly commensurable lattices.  We show that no such examples exist for the complex forms of type \(E_8\), \(F_4\), and \(G_2\).  In contrast, there are arbitrarily many such examples in all other higher rank Lie groups, except possibly \(\mathrm{SL}_{2n+1}(\R)\), \(\mathrm{SL}_{2n+1}(\C)\), \(\mathrm{SL}_n(\mathbb{H})\), or groups of type~\(E_6\).
\end{abstract}

\maketitle

\section{Introduction}

If two residually finite groups are commensurable, so are the profinite completions.  Thus for any class \(\mathcal{C}\) of residually finite groups, the converse \emph{rigidity} question arises: if two groups from \(\mathcal{C}\) are profinitely commensurable, are they abstractly commensurable?  It is known that arithmetic groups with the congruence subgroup property can be used to construct lattices in higher rank Lie groups which are profinitely isomorphic but not abstractly isomorphic; see \cite{Aka:profinite, Aka:arithmetic, Kammeyer:profinite-commensurability,Lubotzky:finiteness}.  Aka showed \cite{Aka:arithmetic} that the profinite isomorphism class of a higher rank lattice contains only finitely many abstract commensurability types of lattices. Here we address the question whether every simple Lie group \(G\) of higher rank actually admits profinitely isomorphic but non-commensurable lattices.
The flexibility of the familiar construction might be taken as evidence,
 that the answer should be affirmative. However, it turns out that the question is more delicate, and the answer depends on \(G\).

\begin{theorem} \label{thm:nonrigidity}
  Let \(G\) be a connected simple higher rank Lie group with finite center which is
  \begin{itemize}
  \item neither a complex form of type \(E_8\), \(F_4\), or \(G_2\),
  \item nor a real or complex form of type \(E_6\),
  \item nor locally isomorphic to \(\mathrm{SL}_{2m+1}(\R)\), \(\mathrm{SL}_{2m+1}(\C)\), or \(\mathrm{SL}_m(\mathbb{H})\).
  \end{itemize}
    Then for each \(n \ge 2\), there exist cocompact lattices \(\Gamma_1, \ldots, \Gamma_n \subset G\) which are pairwise profinitely isomorphic but pairwise not commensurable.
\end{theorem}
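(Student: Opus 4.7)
The plan is to reduce the problem to a computation in Galois cohomology and then address each admissible type of $G$ separately. Since $G$ has higher rank, Margulis arithmeticity ensures every lattice in $G$ is commensurable with $\alg{G}(\calO_k)$ for some number field $k$ and some simply connected absolutely almost simple $k$-group $\alg{G}$ with the prescribed archimedean behavior. The congruence subgroup property, available in all relevant cases, then identifies the profinite completion of $\alg{G}(\calO_k)$ with the adelic group $\alg{G}(\widehat{\calO}_k)$ up to finite data. Hence two arithmetic lattices are profinitely commensurable exactly when the underlying $k$-forms are locally isomorphic at every place of $k$, whereas they are abstractly commensurable exactly when the forms are $k$-isomorphic at the adjoint level. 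Producing $n$ pairwise profinitely commensurable, pairwise non-commensurable lattices therefore reduces to producing $n$ pairwise non-$k$-isomorphic but everywhere locally isomorphic $k$-forms of $\alg{G}$ realising $G$ at infinity.

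Since $k$-forms of a fixed split group $\alg{G}_0$ are parametrized by $H^1(k, \Aut(\alg{G}_0))$, the desired families live in the Tate--Shafarevich kernel
\[
\ker\bigl(\, H^1(k, \Aut(\alg{G}_0)) \longrightarrow \prod_v H^1(k_v, \Aut(\alg{G}_0))\,\bigr),
\]
possibly after a twist pinning the archimedean data. I would then proceed type by type. For the admitted classical cases, the automorphism group is controlled by central simple algebras with involution together with Hermitian or quadratic forms, and one can inflate the Brauer-group contributions $H^2(k,\mu_n)$ by a class-field-theoretic choice of $k$ to produce arbitrarily many local-global classes. For the admitted exceptional real forms, the center or outer automorphism scheme of $\alg{G}_0$ carries a non-trivial finite commutative piece, and Poitou--Tate duality combined with Grunwald--Wang realises any prescribed collection of local-global obstructions. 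The exclusions in the statement are exactly the loci where this strategy fails: for complex $E_8$, $F_4$, $G_2$ the Hasse principle results of Harder, Kneser and Chernousov force the Tate--Shafarevich kernel to vanish; for inner forms of $\SL_{2m+1}$ the Albert--Brauer--Hasse--Noether theorem rules out the odd-degree Brauer contributions; and the cases of $E_6$ and the excluded $\SL$-families remain beyond the reach of these tools.

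The principal obstacle is the delicate coordination between archimedean and non-archimedean data: the $k$-forms produced must match at every infinite place so as to give lattices in the \emph{same} real Lie group $G$, yet must differ at finitely many finite places to yield distinct commensurability classes, all compatibly with Poitou--Tate duality. Balancing this while controlling residual invariants such as signatures, spinor norms and involution types is what drives the type-by-type bookkeeping, and each exceptional case requires its own dedicated construction of a number field and a form tailored so that the center of $\alg{G}_0$ admits sufficiently many local-global classes.
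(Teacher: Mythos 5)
The reduction you propose is fundamentally different from the paper's and has a gap that would make it fail for a large portion of the cases the theorem covers. You claim that the $k$-forms ``must match at every infinite place so as to give lattices in the same real Lie group $G$,'' and hence that the problem reduces to producing nontrivial classes in the Tate--Shafarevich kernel of $H^1(k,\Aut(\alg{G}_0))$. But then your own observation that the Hasse principle forces this kernel to vanish for $E_8$, $F_4$, $G_2$ applies equally to the \emph{real} forms $E_{8(8)}$, $E_{8(-24)}$, $F_{4(4)}$, $F_{4(-20)}$, $G_{2(2)}$, which the theorem does \emph{not} exclude. The same issue arises for the types $B_n$, $C_n$, $E_7$ with trivial outer automorphism group. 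Your strategy therefore cannot produce the required examples in precisely the types where the theorem asserts they exist.

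The paper's key idea, which your proposal misses, is that the $k$-forms $\alg{G}_1,\dots,\alg{G}_n$ are \emph{not} required to be isomorphic at all archimedean places. They are taken over a totally real (or almost totally real) field $k$ of degree $n$, with $\alg{G}_i$ isomorphic to the target real form at the $i$-th archimedean place and compact at all the others. Different groups thus differ at archimedean places, yet all give cocompact lattices in the same Lie group $G$ via different embeddings of $k$ into $\R$ or $\C$. Non-commensurability then follows from Margulis superrigidity \emph{provided} the field $k$ is chosen to have trivial automorphism group (Lemmas~\ref{lem:existence} and~\ref{lem:existence-c}); without that control, an automorphism of $k$ could permute the archimedean places and identify the groups. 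Your proposal does not address this second, equally essential ingredient. Furthermore, profinite commensurability in the paper does not require local isomorphism at every finite place but only a topological isomorphism of $\alg{G}_i(\A_k^f)$, which in the $\SL_{2m}$ and complex cases is achieved by a permutation of finite places over a split prime rather than by coincidence of local data. Finally, your stated reason for excluding $\SL_{2m+1}$ is off the mark: the obstruction is the unresolved congruence subgroup property for anisotropic inner forms together with the failure of the classification of Hermitian forms to separate special unitary groups in odd dimension, not an Albert--Brauer--Hasse--Noether parity constraint.
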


So in most cases rigidity fails in a strong sense but the first three groups form a surprising exception.

\begin{theorem} \label{thm:rigidity}
  Let \(G\) be a connected simple complex Lie group of type \(E_8\), \(F_4\), or \(G_2\) and let \(\Gamma_1, \Gamma_2 \subset G\) be lattices such that \(\Gamma_1\) is profinitely commensurable with \(\Gamma_2\).  Then \(\Gamma_1\) is commensurable with \(\Gamma_2\).
\end{theorem}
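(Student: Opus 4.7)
The plan is to reduce profinite commensurability to an isomorphism of adelic integer points via Margulis arithmeticity and the congruence subgroup property, and then to exploit the exceptional rigidity of types \(E_8\), \(F_4\), \(G_2\)---trivial outer automorphism group, trivial center, and uniqueness of local non-archimedean forms---to recover the defining number field. The main obstacle will be the last step: reconstructing the topological local ring \(\calO_{k,v}\) from the abstract profinite group \(\alg{H}(\calO_{k,v})\).

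Since \(G\) has real rank at least two, Margulis's arithmeticity theorem supplies, for each \(i\), a number field \(k_i\) and a simply connected absolutely almost simple \(k_i\)-group \(\alg{H}_i\) of the same absolute type as \(G\), such that \(\Gamma_i\) is commensurable with \(\alg{H}_i(\calO_{k_i})\). Since \(G\) is complex, \(k_i\) has a unique complex place \(v_0^{(i)}\) at which \(\alg{H}_i\) realises \(G\), and \(\alg{H}_i\) is compact at every real place of \(k_i\). For the three types in question the Dynkin diagram has no non-trivial automorphisms and the simply connected form is centerless, so every form is an inner twist of the split one. The Hasse principle for \(H^1\) of simply connected semisimple groups (Kneser--Harder--Chernousov), combined with Kneser's vanishing \(H^1(k_{i,v},\alg{H}_i)=0\) at non-archimedean \(v\), forces the form with the prescribed archimedean behaviour to be unique. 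Thus the commensurability class of \(\Gamma_i\) depends only on the isomorphism class of \(k_i\), and it suffices to show \(k_1\cong k_2\).

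For simply connected groups of these exceptional types the congruence subgroup problem is solved in the positive with trivial congruence kernel, the center being trivial. After passing to congruence subgroups of finite index---harmless for the question at hand---we identify \(\widehat{\Gamma_i}\) with an open subgroup of the adelic compact group \(\prod_v \alg{H}_i(\calO_{k_i,v})\), so a profinite commensurability of \(\Gamma_1\) and \(\Gamma_2\) yields an abstract isomorphism between two such open subgroups. The decisive step is now to read off the topological ring \(\calO_{k_i,v}\) from each local factor \(\alg{H}_i(\calO_{k_i,v})\); here the uniqueness of local forms for \(E_8\), \(F_4\), \(G_2\) is crucial, because it identifies the factor as the \(\calO_{k_i,v}\)-points of a fixed Chevalley scheme. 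The Steinberg root group decomposition then exhibits additive subgroups \(U_\alpha\cong(\calO_{k_i,v},+)\), and the split torus action recovers the ring multiplication. Matching pro-\(p\) Sylow subgroups on both sides of the adelic isomorphism and applying this reconstruction place by place assembles to an isomorphism of profinite rings \(\widehat{\calO_{k_1}}\cong\widehat{\calO_{k_2}}\). This local reconstruction is the technical heart of the argument and is what is not available for the types excluded from Theorem~\ref{thm:rigidity}.

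Tensoring with \(\Q\) yields an isomorphism of finite adele rings whose common \(\widehat{\Z}\)-rank recovers \([k_i:\Q]\); together with the fixed signature \((r,1)\) imposed by \(G\) being complex this fixes the archimedean factor, so the full adele ring of \(k_i\) is recovered. By the theorem of Iwasawa and Uchida the number field \(k_i\) is determined by its adele ring, hence \(k_1\cong k_2\), and by the second paragraph \(\Gamma_1\) is commensurable with \(\Gamma_2\).
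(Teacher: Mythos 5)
Your overall strategy is aligned with the paper's: Margulis arithmeticity to produce arithmetic models \(\alg{H}_i/k_i\) with one complex and otherwise real places and \(\alg{H}_i\) anisotropic at all real places, the congruence subgroup property to identify the profinite completion with (an open subgroup of) \(\alg{H}_i(\bbA_{k_i}^f)\), the exceptional structure of \(E_8\), \(F_4\), \(G_2\) (trivial center, no diagram symmetries, no non-split non-archimedean forms) to reduce the problem to identifying the base fields, and the Hasse principle for \(H^1\) to conclude uniqueness of the form once the field is fixed. That part is fine.

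The gap is in the last step. Your reconstruction procedure (reading off \(\calO_{k_i,v}\) from the local factors via root subgroups and the torus action) can at best yield an isomorphism of the finite adele rings \(\bbA_{k_1}^f \cong \bbA_{k_2}^f\), equivalently an isomorphism \(k_1\otimes_\Q\Q_p\cong k_2\otimes_\Q\Q_p\) for all primes \(p\). Together with the matching signature, this gives an isomorphism of full adele rings \(\bbA_{k_1}\cong\bbA_{k_2}\). But this is precisely \emph{arithmetic equivalence} of \(k_1\) and \(k_2\), and arithmetically equivalent number fields need not be isomorphic---there are classical Gassmann examples of non-isomorphic fields with isomorphic adele rings. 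The appeal to ``Iwasawa and Uchida'' is therefore wrong as stated: no theorem of theirs asserts that the abstract topological ring \(\bbA_k\) determines \(k\), and indeed such a theorem would be false. What saves the argument is exactly the hypothesis you only mention in passing, that \(k_1\) and \(k_2\) each have precisely one complex place. For such fields, arithmetic equivalence \emph{does} imply isomorphism---but this is a nontrivial result of Chinburg--Hamilton--Long--Reid (\cite{Chinburg-et-al:geodesics}*{Corollary 1.4}), not a formal consequence of the adelic ring structure. This is the missing lemma.

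A secondary comment: the explicit ring reconstruction from the abstract profinite group is a heavier and far less certain route than necessary. The paper simply cites \cite{Kammeyer:profinite-commensurability}*{Theorem 4}, which shows directly that profinitely commensurable arithmetic groups (with CSP, in this setting) have arithmetically equivalent base fields, and then applies Chinburg--Hamilton--Long--Reid. Your Steinberg-decomposition approach, while plausible in spirit given that the non-archimedean local forms are all split, would require substantial work to make rigorous---in particular, abstractly locating the root subgroups and the torus action inside a compact open subgroup, and controlling the ambiguity in the choice of a ``pro-\(p\) Sylow'' decomposition of the adelic factor. Even if carried out, it would only re-derive arithmetic equivalence, which is exactly where you still need the Chinburg--Hamilton--Long--Reid input.
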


Going through the census, the complete list of higher rank simple Lie groups currently not covered by either Theorem~\ref{thm:nonrigidity} or Theorem~\ref{thm:rigidity} is given by \(\mathrm{SL}_{2m-1}(\R)\) and \(\mathrm{SL}_{2m-1}(\C)\), and \(\mathrm{SL}_m(\mathbb{H})\) for \(m \ge 2\), as well as all the \(E_6\)-forms: \(E_{6(6)}\), \(E_{6(2)}\), \(E_{6(-14)}\), \(E_{6(-26)}\), and \(E_6(\C)\).  If the congruence subgroup property was known to hold true for higher rank groups of type \(A_n\) and \(E_6\), all these groups would likewise fall under the conclusion of Theorem~\ref{thm:nonrigidity}. For instance, the non-isomorphic but locally isomorphic algebraic groups of types $A_n$, $D_{2n+1}$ and $E_6$ in \cite[Thm. 9.12]{Prasad-Rapinchuk:weakly-comm} would give rise to examples.  Let us stress however that our result is unconditional. Though the congruence subgroup problem is generally open in type \(A_n\) and \(D_4\), we were able to exploit the partial progress made in the literature to the extent that the type \(A_n\) groups \(\mathrm{SL}_{2m}(\R)\) and \(\mathrm{SL}_{2m}(\C)\), as well as \(\mathrm{SU}(p,q)\) for \(p,q \ge 2\), and all the type \(D_4\) groups \(\mathrm{SO}^0(6,2)\), \(\mathrm{SO}^0(5,3)\), \(\mathrm{SO}^0(4,4)\), and \(\mathrm{SO}^*(8)\) are included in Theorem~\ref{thm:nonrigidity}. 

 The lattices constructed here are intrinsically cocompact, since our approach is based on arithmetic groups defined over number fields of large degree. The construction of profinitely isomorphic \emph{non-cocompact} lattices requires different methods; e.g., similar to the construction in \cite[Thm.~9.12]{Prasad-Rapinchuk:weakly-comm}. In Proposition \ref{prop:lattices-SL} we illustrate this by showing that
  $\SL_m(\bbR)$, $\SL_m(\bbC)$ and $\SL_m(\bbH)$, where $m\geq 6$ is composite, admit profinitely isomorphic, non-commensurable, non-cocompact lattices. 

For the various types of \(G\), we employ varying methods to construct the families \(\Gamma_1, \ldots, \Gamma_n \subset G\) of profinitely isomorphic but non-commensurable lattices in Theorem~\ref{thm:nonrigidity}.  There does not seem to be a uniform approach that would work in all cases.  This is however different for the question that is more commonly addressed under the term \emph{profinite rigidity} in the literature: if two groups have isomorphic profinite completions, are they isomorphic?  Here the congruence subgroup property, whenever it is known to hold for \(G\), can be used to construct profinitely isomorphic but non-isomorphic congruence subgroups in a uniform way.

\begin{theorem} \label{thm:profinite-rigidity}
Let \(G\) be a higher rank connected simple Lie group with trivial center and not isomorphic to \(\mathrm{PSL}_m(\mathbb{H})\) or to a complex or real form of type \(E_6\).  Then there exist arbitrarily many cocompact lattices in \(G\) which are pairwise profinitely isomorphic but pairwise not isomorphic.
\end{theorem}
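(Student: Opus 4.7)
The plan is to carry out a uniform construction based on the congruence subgroup property (CSP) for a suitably chosen arithmetic realisation of $G$. For each admissible $G$ in the statement, I would first exhibit an absolutely almost simple, simply connected algebraic group $\alg{G}$ over a number field $k$, together with a distinguished Archimedean place $v_0$, so that $\alg{G}(k_{v_0})$ is locally isomorphic to the simply connected cover of $G$, the groups $\alg{G}(k_v)$ are compact at all other Archimedean places, and CSP is known for $\alg{G}$. The arithmetic group $\alg{G}(\calO_k)$ then descends, after dividing by the centre, to a cocompact lattice $\Gamma_0 \subset G$. The two exclusions $\PSL_m(\bbH)$ and the $E_6$-forms correspond precisely to the higher-rank types (anisotropic inner type $A$ and type $E_6$) where CSP is still open; in every other case the combined results of Matsumoto, Raghunathan, Rapinchuk, Prasad--Rapinchuk, Tomanov and others suffice.

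Given CSP, the profinite completion of any congruence subgroup of $\Gamma_0$ is identified, up to a bounded central kernel, with its closure in $\alg{G}(\bbA_k^f)$, which is an open subgroup of $\prod_v \alg{G}(\calO_v)$. I would then choose $n$ distinct finite places $v_1,\dots,v_n$ of $k$, outside a finite bad set, such that each $\alg{G}(\calO_{v_i})$ is hyperspecial maximal compact and all of them are pairwise abstractly isomorphic as topological groups (for instance, primes of the same residue degree above the same rational prime). A short Chebotarev argument, after possibly enlarging $k$, also lets me arrange that no two of the $v_i$ are in the same orbit of the natural action of $\mathrm{Comm}_G(\Gamma_0) \rtimes \mathrm{Aut}_k(\alg{G})$ on the places of $k$; in the generic situation where $\alg{G}$ does not descend to a subfield and has no nontrivial $k$-rational outer automorphisms, this orbit condition is automatic. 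For each $i$ I would then set $\Gamma_i = \Gamma_0 \cap \ker\bigl(\alg{G}(\widehat{\calO}_k) \to \alg{G}(\kappa(v_i))\bigr)$, the principal congruence subgroup of level $v_i$.

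The profinite isomorphism $\widehat{\Gamma}_i \cong \widehat{\Gamma}_j$ is then essentially formal: swapping the $v_i$- and $v_j$-factors in $\prod_v \alg{G}(\calO_v)$ is a topological isomorphism between the two corresponding open subgroups, and via CSP it lifts to the desired isomorphism of profinite completions. For pairwise non-isomorphism of the $\Gamma_i$ as abstract groups, I would argue by contradiction using Margulis superrigidity (available since $G$ is higher-rank, simple, and centreless): an isomorphism $\Gamma_i \cong \Gamma_j$ would extend to an automorphism of $G$, realised inside $\mathrm{Comm}_G(\Gamma_0) \rtimes \mathrm{Aut}_k(\alg{G})$. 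But this latter group acts on $\alg{G}(\bbA_k^f)$ diagonally in the places, so it cannot exchange the level-$v_i$ condition with the level-$v_j$ condition unless $v_i$ and $v_j$ lie in the same orbit, which contradicts our choice.

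The main obstacle will be the first step: producing a uniform arithmetic realisation of the various higher-rank Lie groups appearing in the statement for which CSP is actually known, and then carefully verifying that the automorphism of $G$ produced by Margulis really does factor through $\mathrm{Comm}_G(\Gamma_0) \rtimes \mathrm{Aut}_k(\alg{G})$ and acts on the adelic picture in the diagonal manner claimed. The complex Lie groups require extra care, because complex conjugation gives an additional outer automorphism that may permute local factors and has to be absorbed into the orbit bookkeeping; once this is in place, the profinite-isomorphism half and the orbit count for the non-isomorphism half are comparatively routine.
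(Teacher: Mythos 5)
Your overall strategy matches the paper's: arithmetically realize $G$ with CSP, build congruence subgroups whose levels are concentrated at places that can be permuted in the adelic picture (giving the profinite isomorphism), and kill abstract isomorphisms via Margulis superrigidity. But the non-isomorphism half of your argument has two genuine gaps.

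First, your ``orbit of places'' bookkeeping is misplaced. Neither $\mathrm{Comm}_G(\Gamma_0)$ nor $\mathrm{Aut}_k(\alg{G})$ acts on the set of places of $k$; what can permute places is $\Aut(k)$ (field automorphisms of $k$). When Margulis superrigidity is applied to an abstract isomorphism $\phi\colon\Gamma_i\to\Gamma_j$, the output is a field isomorphism $\sigma\colon k\to k$ together with a $\sigma$-semilinear morphism of the algebraic group; it is only after forcing $\sigma=\mathrm{id}$ that the resulting $\eta$ acts diagonally on $\alg{G}(\A_k^f)$. The paper simply arranges this from the start by choosing $k$ with trivial automorphism group (types I and II from Lemmas~\ref{lem:existence} and~\ref{lem:existence-c}); your Chebotarev/enlargement remark does not obviously produce this, and the ``generic situation'' you invoke is not what you need.

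Second, and more seriously, you ignore the central twist in Margulis superrigidity. The correct conclusion is $\phi(\gamma)=\nu(\gamma)\,\eta(\gamma)$ with $\eta$ a $k$-automorphism of the simply connected group $\widetilde{\alg{G}}$ (which has nontrivial center, even though $G$ is centerless) and $\nu\colon\Gamma_i\to Z(\widetilde{\alg{G}})(k)$ a homomorphism. Because of $\nu$, the closure $\overline{\phi(\Gamma_i)}$ in $\alg{G}(\A_k^f)$ need not equal $\eta(\overline{\Gamma_i})$; it is only contained in $Z(\widetilde{\alg{G}})(k)\cdot\eta(\overline{\Gamma_i})$. Your diagonal-action argument then does not directly yield a contradiction, because the central factor could a priori restore the level at $v_i$. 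The paper closes this gap quantitatively: it chooses the deeper level subgroups $K_{1,i}\subset K_{0,i}$ with index $|K_{0,i}:K_{1,i}| > |Z(\widetilde{\alg{G}})(k)|$ and then derives a contradiction by comparing Haar measures, using that $\eta$ is measure-preserving (inner automorphisms have finite index in $\Aut(\widetilde{\alg{G}})$). You would need some version of this index/volume estimate (or an argument showing $\nu$ is trivial on a subgroup compatible with the level structure) to make the non-isomorphism step rigorous.
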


Again, the exceptions can likely be omitted.  They only owe to the incomplete status of the congruence subgroup problem.  We see that profinite rigidity in the usual sense also fails for \(E_8(\C)\), \(F_4(\C)\), and \(G_2(\C)\), and most likey fails for lattices in all higher rank simple Lie groups.  However, the lattices we construct in the proof of Theorem \ref{thm:profinite-rigidity} are arithmetic subgroups of different congruence levels.  While they are not isomorphic, they are commensurable which is why they should not really be considered as distinct lattices.  This is why we find it more on point to ask for profinitely commensurable lattices in \(G\) of higher rank which are not commensurable.

Let us remark that profinite rigidity questions are typically only posed for residually finite groups to avoid trivial examples like \(\widehat{\Gamma \times \Lambda} \cong \widehat{\Gamma} \times \widehat{\Lambda} \cong \widehat{\Gamma}\) for any group \(\Gamma\) and any infinite simple group \(\Lambda\). Most simple Lie groups with finite center are linear and hence lattices are residually finite.  However, there are simple Lie groups with finite center which admit lattices that are not residually finite.  An example is given by the four-fold covering of $\mathrm{Sp}_n(\bbR)$ \cite{Deligne1978}. In these cases it is nevertheless reasonable to consider the profinite completion because the kernel of the completion homomorphism $\Gamma \to \widehat{\Gamma}$ is merely a finite central subgroup.

\medskip

Profinite rigidity of groups and related problems receive considerable attention in current research activities; see \cite{Reid:profinite-rigidity} for a survey. Fundamental groups of $3$-manifolds, see \cite{BMRS} and references therein, and lattices in Lie groups \cite{KKRS,Stover:PU} are the main objects of interest.

\medskip

We briefly sketch the proofs of Theorems~\ref{thm:nonrigidity} and~\ref{thm:rigidity}.  As we just commented, the proof of Theorem~\ref{thm:nonrigidity} splits up into various cases.  At this point, we shall only present the most common line of arguments that works for most real forms \(G\).  We may assume $G = \alg{G}(\R)$ for an absolutely simple, simply connected algebraic \(\R\)-group \(\mathbf{G}\).  We construct linear algebraic groups $\alg{G}_1,\dots,\alg{G}_n$ over a suitable totally real number field $k$ such that $\alg{G}_i$ is isomorphic to $\alg{G}$ at exactly one real place of $k$ and is compact at all other real places.  The lattices arise as arithmetic subgroups $\Gamma_i \subseteq \alg{G}_i(k)$, ensuring that the congruence subgroup property holds for the groups $\alg{G}_i$ (special attention is needed in type $A_m$ and \(D_4\)).  The core of the argument is a local-global principle which \emph{almost} allows us to achieve that the groups $\alg{G}_1,\dots,\alg{G}_n$ are isomorphic at all finite places of $k$.  If the groups $\alg{G}_i$ were isomorphic at all finite places of $k$, then the congruence subgroup property implies that the groups are profinitely commensurable (by Theorem~\ref{thm:toolbox} and Lemma~\ref{lem:iso-adelic}).  If the field $k$ has no non-trivial automorphisms, then Margulis superrigidity can be used to deduce that the arithmetic lattices are not commensurable (Theorem~\ref{thm:superrigidity}).  There is however a caveat: The local-global principle only allows to control the isomorphism type except for one  finite place.  But since there exists only a finite number of possible \(\mathfrak{p}\)-adic types for the groups \(\mathbf{G}_i\), we can infer from Dirichlet's box principle that for arbitrary large \(n\), arbitrarily many of the groups $\alg{G}_i$ are isomorphic at every finite place.

To prove Theorem~\ref{thm:rigidity}, we apply Margulis arithmeticity to conclude that for \(i = 1,2\), \(\Gamma_i\) is commensurable with an arithmetic subgroup in a \(k_i\)-group \(\mathbf{G_i}\) over some number field \(k_i\) with precisely one complex place such that \(\mathbf{G_i}\) is anisotropic at all real places of \(k_i\).  The congruence subgroup property, which is known in the exceptional types under consideration, effects that \(\widehat{\Gamma_i}\) is commensurable with the finite adele points \(\mathbf{G_i}(\mathbb{A}^f_{k_i})\) of \(\mathbf{G_i}\).  Hence \(\mathbf{G_1}(\mathbb{A}^f_{k_1})\) is commensurable with \(\mathbf{G_2}(\mathbb{A}^f_{k_2})\) from which we conclude that the number fields \(k_1\) and \(k_2\) are \emph{arithmetically equivalent}, meaning they have equal Dedekind zeta function.  There exists a myriad of arithmetically equivalent number fields, also among totally real ones, which are not isomorphic.  However, a theorem due to Chinburg--Hamilton--Long--Reid says that arithmetically equivalent number fields with precisely one complex place are isomorphic~\cite{Chinburg-et-al:geodesics}*{Corollary~1.4}.  We conclude that \(\mathbf{G_2}\) is a \(k_1\)-twist of \(\mathbf{G_1}\).  These are classified by the noncommutative Galois cohomology set \(H^1(k_1, \mathbf{G_1})\) because \(\mathbf{G_1}\) has trivial center and trivial outer automorphism group.  The \emph{Hasse principle} for simply connected groups in combination with M.\,Kneser's vanishing result for Galois cohomology over \(\mathfrak{p}\)-adic fields therefore implies that \(\mathbf{G}_1\) is actually \(k_1\)-isomorphic to \(\mathbf{G}_2\), hence \(\Gamma_1\) is commensurable with \(\Gamma_2\).
   
\subsection*{Structure of the article.} We discuss some preliminaries in Section~\ref{sec:preliminaries}.  Sections~\ref{sec:proof-nonrigidity}, \ref{sec:proof-rigidity} and~\ref{sec:proof-profinite-rigidity} are dedicated to the proofs of Theorems~\ref{thm:nonrigidity}, \ref{thm:rigidity}, and~\ref{thm:profinite-rigidity}, respectively.  A short survey on the status of the congruence subgroup problem is included as an appendix.

\section{Preliminaries}\label{sec:preliminaries}

\subsection{Notation} Let $k$ be an algebraic number field. The set of places of $k$ is denoted by $V(k) = V_\infty(k) \cup V_f(k)$; it is the union of the set of archimedean places $V_\infty(k)$ and the set of finite places $V_f(k)$. The completion of $k$ at $v \in V(k)$ is denoted by $k_v$. 
The ring of adeles (respectively of finite adeles) of $k$ is $\bbA_k$ (resp.~$\bbA_k^f$).

\subsection{Number fields without automorphisms} The following result is well-known; e.g., \cite{Milne-Suh}*{Proposition 2.3}.

\begin{lemma}\label{lem:existence}
There are totally real number fields of arbitrarily large degree over \(\Q\) with trivial automorphism group.
\end{lemma}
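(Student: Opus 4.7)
The plan is to produce, for every $n \geq 3$, an irreducible polynomial $f_n(x) \in \Q[x]$ of degree $n$ with $n$ distinct real roots and Galois group $S_n$ over $\Q$. The number field $K_n := \Q[x]/(f_n)$ is then totally real of degree $n$ over $\Q$, and its degree can be made arbitrarily large.

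To deduce triviality of $\Aut(K_n/\Q)$, let $L_n$ denote the splitting field of $f_n$, so that $G := \Gal(L_n/\Q) \cong S_n$ via its action on the $n$ roots of $f_n$. The subgroup $H := \Gal(L_n/K_n)$ is then identified with the stabilizer of a chosen root, hence $H \cong S_{n-1}$. The Galois correspondence gives $\Aut(K_n/\Q) \cong N_G(H)/H$, and since the stabilizer in $S_n$ of a point under the natural action on $\{1,\dots,n\}$ is self-normalizing for $n \geq 3$ (distinct points have distinct stabilizers, so $g S_{n-1} g^{-1} = S_{n-1}$ forces $g \in S_{n-1}$), we conclude $\Aut(K_n/\Q) = 1$.

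The construction of $f_n$ is the heart of the argument. I would start from the totally real polynomial $g(x) = \prod_{i=1}^n (x-i) \in \Z[x]$, which has $n$ simple real roots, and consider the one-parameter family $f_t(x) = g(x) + t$ for $t \in \Q$. Since simple real roots persist under small perturbations of the coefficients, there is an open neighborhood $U \subset \Q$ of $0$ such that $f_t$ has $n$ distinct real roots for every $t \in U$. On the other hand, Hilbert's irreducibility theorem, applied to the generic polynomial $g(x)+T \in \Q(T)[x]$, produces a Hilbert subset of $\Q$, necessarily dense, consisting of those $t$ for which $f_t$ is irreducible over $\Q$ with Galois group the full symmetric group $S_n$. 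Any rational $t$ in the intersection of $U$ with this Hilbert subset gives a polynomial $f_n := f_t$ with the required properties.

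The main obstacle is verifying that $g(x)+T$ has Galois group $S_n$ over $\Q(T)$, which is needed to trigger Hilbert irreducibility. A standard route is via monodromy: after an auxiliary small perturbation of $g$ preserving total reality, one may arrange that the $n-1$ critical values of $g$ are all distinct, so that the branched covering of the affine $t$-line defined by $g(x)+t = 0$ has simple branching at each finite critical value. The local monodromies at these points are then transpositions, while the local monodromy at $t = \infty$ is an $n$-cycle because the cover is totally ramified there of degree $n$. Since the product of all local monodromies is trivial, the $n-1$ transpositions multiply to an $n$-cycle, forcing the subgroup of $S_n$ they generate to be transitive; a transitive subgroup of $S_n$ generated by transpositions is $S_n$, so the geometric monodromy, and hence the Galois group of $g(x)+T$ over $\Q(T)$, equals $S_n$.
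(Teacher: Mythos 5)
Your argument is correct and its core reduction is identical to the paper's: realize the field as the fixed field of an $S_{n-1}$ inside a totally real $S_n$-Galois extension, and conclude from the self-normalizing property of $S_{n-1}$ in $S_n$ that the automorphism group is trivial. The only genuine difference is that the paper treats the existence of totally real Galois extensions of $\Q$ with group $S_n$ as well-known (citing Milne--Suh), whereas you actually construct one via the family $g(x)+t$ with $g(x)=\prod_{i=1}^n(x-i)$, Hilbert irreducibility over $\Q(T)$, and a monodromy computation showing the geometric (hence arithmetic) monodromy of $g(x)+T$ is $S_n$. That extra step is sound: a small rational perturbation of $g$ keeps all $n$ roots real and simple while making the $n-1$ critical values distinct (both are open, generically satisfied rational conditions), the local monodromies at the finite branch points are then transpositions multiplying to the $n$-cycle at infinity, which forces transitivity and hence the full symmetric group, and Hilbert subsets of $\Q$ are dense in the real topology so a suitable specialization $t$ exists in the interval where $f_t$ remains totally real. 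So you have filled in a black box rather than taken a different route; both proofs are correct.
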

\begin{proof}
For (arbitrarily large) \(n \ge 3\), let $K/\bbQ$ be a totally real Galois extension with Galois group $\Gal(K/\bbQ) \cong S_n$.
%
%; for the existence see \cite{Proposition 2.3}\commentst{Habe dieses Resultat nur in einer Abschlussarbeit gelesen ohne Quelle. Wenn man eine gute Quelle findet, sollte man diesen Weg w\"ahlen.}.\commentho{Ich nehme an, du meinst diese hier:  \url{http://pi.math.cornell.edu/files/Research/SeniorTheses/kalyanswamyThesis.pdf} Ich finde, das kann man zitieren, es wird ja nicht nur erwähnt, sonder als Satz~4.2.3 bewiesen und Cornell ist ja Ivy League...} 
%
Let $H \leq \Gal(K/\Q)$ be a subgroup isomorphic to $S_{n-1}$. The fixed field $k = K^H$ is totally real, $[k:\bbQ] = n$ and $k$ has a trivial automorphism group. Indeed, an automorphism of $k$ extended to $K/\bbQ$ normalizes $H$ but $S_{n-1}$ is self-normalizing in $S_n$.
\end{proof}

Similarly, there is the following result for ``almost'' totally real fields.
\begin{lemma}\label{lem:existence-c}
There are number fields of arbitrarily large degree over \(\Q\) with precisely one complex place and with trivial automorphism group.
\end{lemma}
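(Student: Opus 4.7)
The plan is to adapt the proof of Lemma \ref{lem:existence}, replacing the totally real Galois $S_n$-extension of $\Q$ with one in which complex conjugation acts as a transposition on the natural $n$-element $S_n$-set.

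Concretely, for each $n \geq 3$ I would first produce an irreducible polynomial $f(x) \in \Q[x]$ of degree $n$ whose splitting field has Galois group $S_n$ and whose real signature is $(n-2,1)$; that is, $f$ has $n-2$ real roots and one pair of complex conjugate roots. A convenient construction starts from the reducible polynomial $(x^2+1)\prod_{i=1}^{n-2}(x-i)$ and perturbs its coefficients: the prescribed signature is preserved under small perturbations, while Hilbert's irreducibility theorem, applied to a suitable one-parameter family, forces the Galois group of the splitting field to be $S_n$ on a Zariski-dense set of specializations.

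Let $K \subset \C$ be the splitting field and identify $G = \Gal(K/\Q)$ with $S_n$ acting on the $n$ roots of $f$. Under this identification, complex conjugation $c \in G$ fixes the $n-2$ real roots and swaps the two complex roots, so $c$ is a transposition. Fix a real root $\alpha$ and set $H = \mathrm{Stab}_G(\alpha) \cong S_{n-1}$, so that $k := K^H = \Q(\alpha)$ has degree $n$ over $\Q$. The $n$ embeddings of $k$ into $\C$ correspond to the $n$ roots of $f$ via $\alpha \mapsto \alpha'$; consequently $k$ has $n-2$ real embeddings and exactly one complex place. As in Lemma \ref{lem:existence}, any automorphism of $k$ extends to an element of $G$ normalizing $H$, and the self-normalizing property of $S_{n-1}$ in $S_n$ forces this element to lie in $H$, so it acts trivially on $k$; thus $\Aut(k)$ is trivial.

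The only step not already present in Lemma \ref{lem:existence} is the existence of such an $f$ with both $\Gal(f) = S_n$ and signature $(n-2,1)$. This is classical but deserves a careful justification along the lines sketched above; alternatively, one could cite known explicit polynomial families of the desired shape, or invoke inverse Galois results that realize $S_n$ over $\Q$ with prescribed archimedean decomposition type.
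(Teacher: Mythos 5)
Your overall strategy coincides with the paper's: realize $S_n$ as a Galois group over $\Q$ with exactly one pair of complex conjugate roots, take $k$ to be the fixed field of a point stabilizer $S_{n-1}$, and use self-normalization of $S_{n-1}$ in $S_n$ to kill automorphisms. The two write-ups diverge only in how the $S_n$-extension with the right archimedean signature is produced, and that difference is worth noting. You aim at every $n \geq 3$ and propose to get $\Gal(f) = S_n$ by perturbation plus Hilbert irreducibility; as you yourself flag, this is the one step that is not routine, since one must argue both that the chosen one-parameter family has generic Galois group $S_n$ over $\Q(t)$ and that the dense Hilbert set of good specializations meets the open real interval of parameters with signature $(n-2,1)$. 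The paper instead restricts to prime degree $p$ (which still gives arbitrarily large degree, all that the lemma asks): an irreducible degree-$p$ polynomial with exactly two non-real roots automatically has Galois group $S_p$, because transitivity forces a $p$-cycle (Cauchy), complex conjugation supplies a transposition, and a $p$-cycle together with any transposition generates $S_p$ when $p$ is prime. This sidesteps Hilbert irreducibility entirely and makes the existence step elementary. Your route is valid in outline and proves a slightly stronger statement, but if you pursue it you should either supply the missing Hilbert-irreducibility details or, more economically, adopt the prime-degree shortcut, which gives everything the lemma needs with no inverse-Galois machinery.
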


\begin{proof}
Fix a prime number \(p > 2\) and an irreducible rational polynomial \(P\) of degree \(p\) with exactly two non-real roots. 
Let \(K\) be the splitting field of~\(P\) with Galois group \(G \subseteq S_p\).  Given \(a \in K\) with \(P(a)=0\), the subgroup \(H \le G\) corresponding to \(k = \Q(a)\) has index \(p\).  So the group \(G \subseteq S_p\) contains an element of order \(p\) which must be a full \(p\)-cycle because \(p\) is prime.  Moreover, the non-real roots of~\(P\) are complex conjugates of one another, hence complex conjugation exhibits a nontrivial transposition in \(G\).  A symmetric group of prime order is generated by any full cycle and any transposition, so \(G = S_p\).  We observe (as in Lemma~\ref{lem:existence}) that the stabilizer $H$ of $a$ is actually self-normalizing.  Hence \(k = \bbQ(a)\) has trivial automorphism group.
\end{proof}

\subsection{Finite coverings of Lie groups}
The following lemma will be applied to reduce the proof to the case where the Lie group \(G\) is given by the \(\R\)- or \(\C\)-points of a linear algebraic group.
\begin{lemma}\label{lem:going-up-going-down}
Let $f \colon G_1 \to G$ and $g \colon G \to G_0$ be homomorphisms of Lie groups with finite kernels.  Assume that $G$ is linear. If \(G\) possesses $n$ cocompact lattices which are pairwise profinitely commensurable but pairwise not commensurable, then the same holds true for $G_1$ and $G_0$.
\end{lemma}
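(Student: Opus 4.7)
The plan is to transfer the lattices $\Gamma_1,\dots,\Gamma_n \subseteq G$ directly via $f$ and $g$.  Since $f$ and $g$ have finite (hence discrete) kernels, both are local diffeomorphisms, and in particular $f(G_1) \subseteq G$ and $g(G) \subseteq G_0$ are open Lie subgroups of finite index.  After passing to identity components, which only shrinks each $\Gamma_i$ to a finite-index subgroup and affects neither the hypothesis nor the conclusion, we may assume $f \colon G_1 \twoheadrightarrow G$ and $g \colon G \twoheadrightarrow G_0$ are surjective finite coverings of Lie groups.

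For the \emph{going up} direction, I would set $\tilde\Gamma_i := f^{-1}(\Gamma_i) \subseteq G_1$.  Since $f$ is a local diffeomorphism, $\tilde\Gamma_i$ is discrete, and $f$ descends to a homeomorphism $G_1/\tilde\Gamma_i \cong G/\Gamma_i$, so each $\tilde\Gamma_i$ is a cocompact lattice fitting into a central extension $1 \to \ker f \to \tilde\Gamma_i \to \Gamma_i \to 1$ with $\ker f$ finite.  Profinite completion produces a surjection $\widehat{\tilde\Gamma_i} \to \widehat{\Gamma_i}$ whose kernel is a finite quotient of $\ker f$; hence $\widehat{\tilde\Gamma_i}$ and $\widehat{\Gamma_i}$ are commensurable profinite groups via open subgroups avoiding these finite kernels, and profinite commensurability of the $\widehat{\Gamma_i}$ transfers to the $\widehat{\tilde\Gamma_i}$.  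Conversely, if $\tilde\Gamma_i \cap h \tilde\Gamma_j h^{-1}$ had finite index in both for some $h \in G_1$, applying $f$ would show $\Gamma_i$ and $\Gamma_j$ commensurable in $G$ via $f(h)$, contradicting the hypothesis.  The going-down construction is analogous: set $\bar\Gamma_i := g(\Gamma_i) \cong \Gamma_i/(\Gamma_i \cap \ker g) \subseteq G_0$, a cocompact lattice by the same local-diffeomorphism argument, and transfer profinite commensurability upward through the finite central extension $1 \to \Gamma_i \cap \ker g \to \Gamma_i \to \bar\Gamma_i \to 1$.

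The main obstacle is the non-commensurability assertion in the going-down direction, where a potential conjugating element $h_0 \in G_0$ must first be lifted to $h \in G$ (possible since $g$ is surjective), and the ``finite ambiguity'' contributed by $\ker g$ must be controlled.  Concretely, if $\bar\Gamma_i \cap h_0 \bar\Gamma_j h_0^{-1}$ has finite index in both, then the saturated intersection $H := \Gamma_i \cap h\Gamma_j (\ker g) h^{-1}$ has finite index in $\Gamma_i$ as the preimage under $g|_{\Gamma_i}$ of a finite-index subgroup of $\bar\Gamma_i$, while the actual intersection $\Gamma_i \cap h\Gamma_j h^{-1}$ sits inside $H$ with index at most $|\ker g|$.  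Thus $\Gamma_i$ and $\Gamma_j$ would be commensurable in $G$ via $h$, yielding the required contradiction.
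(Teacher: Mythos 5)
The non-commensurability halves of your argument contain a genuine gap. Throughout you argue via conjugation: you suppose that some $h \in G_1$ (respectively $h_0 \in G_0$) conjugates one lattice to a finite-index overgroup of another, and you push this conjugator through $f$ or $g$. But ``commensurable'' in this paper means \emph{abstractly} commensurable --- admitting isomorphic finite-index subgroups --- as is clear from Theorem~\ref{thm:superrigidity}, where the two arithmetic groups live in entirely different algebraic groups over different fields, and from the phrase ``every isomorphism between finite index subgroups'' appearing in the paper's own proof of this lemma. An abstract isomorphism between finite-index subgroups of $f^{-1}(\Gamma_1)$ and $f^{-1}(\Gamma_2)$ need not be realized by conjugation by any element of $G_1$, so your contradiction with the hypothesis never materializes; the same applies in the going-down direction.

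This is also precisely where the hypothesis that $G$ is linear --- which your proposal never invokes --- is needed: it forces each $\Gamma_i$ to be residually finite. The paper uses residual finiteness to show that the kernel $K_i$ of the completion map $\Delta_i = f^{-1}(\Gamma_i) \to \widehat{\Delta_i}$ is contained in $\ker f$, and then passes to a finite-index normal subgroup $\Delta'_i \leq \Delta_i$ with $\Delta'_i \cap \ker f = K_i$. The key point is then that any abstract isomorphism of finite-index subgroups of $\Delta'_1$ and $\Delta'_2$ must carry the \emph{intrinsically defined} completion kernel to the completion kernel, and therefore descends to an isomorphism of finite-index subgroups of $\Gamma_1$ and $\Gamma_2$ --- the desired contradiction. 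For going down, the paper is both simpler and complete: residual finiteness yields finite-index subgroups $\Gamma'_i \leq \Gamma_i$ with $\Gamma'_i \cap \ker g = 1$, so $g$ restricts to an isomorphism onto a lattice in $G_0$, and both profinite commensurability and non-commensurability pass to finite-index subgroups automatically; there is no need to form the quotients $g(\Gamma_i)$ or to lift conjugators. (A smaller point: a finite kernel does not by itself make $f$ a local diffeomorphism --- one needs $\dim G_1 = \dim G$ --- but since the paper's proof makes the same tacit assumption, I will not press it.)
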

\begin{proof}
The linearity will only be used to ensure that all lattices in $G$ are residually finite.  It suffices to treat the case $n = 2$.  Let $\Gamma_1, \Gamma_2 \subseteq G$ be two non-commensurable, profinitely commensurable lattices. Then $\Delta_i= f^{-1}(\Gamma_i)$ is a cocompact lattice in $G_1$.  Let $K_i \subseteq \Delta_i$ be the kernel of the completion $\Delta_i \to \widehat{\Delta}_i$.  Since $\Gamma_i$ is residually finite, $K_i \subseteq \ker(f)$. Since $\ker(f)$ is finite, there is a finite index normal subgroup $\Delta'_i \subseteq \Delta_i$ which intersects $\ker(f)$ exactly in $K_i$ and thus the profinite completion of $\Delta'_i$ is isomorphic to the profinite completion of $f(\Delta'_i)$ which is a finite index normal subgroup of $\Gamma_i$.  Therefore $\Delta'_1$, $\Delta'_2$ are profinitely commensurable.  However, these groups are not commensurable, since every isomorphism between finite index subgroups maps the completion kernel $K_1$ to the kernel $K_2$, i.e., it induces an isomorphism of finite index subgroups of $\Gamma_1$ and $\Gamma_2$.

Since $\Gamma_1, \Gamma_2$ are residually finite and $\ker(g)$ is finite, there are finite index subgroups $\Gamma'_1, \Gamma'_2$ which do not intersect $\ker(g)$ so that they are isomorphic to lattices in $G_0$. As finite index subgroups of $\Gamma_1,\Gamma_2$ they are still profinitely commensurable but not commensurable. 
\end{proof}

\subsection{The congruence subgroup property and its consequences}

A key ingredient in the proof is the \emph{congruence subgroup property} (CSP): the statement that the kernel \(C(k, \mathbf{G})\) of the canonical homomorphism \(\widehat{\mathbf{G}(k)} \rightarrow \overline{\mathbf{G}(k)}\) from the arithmetic completion to the congruence completion of the \(k\)-rational points of certain \(k\)-groups \(\mathbf{G}\) is finite.  We will apply various special cases in which the congruence subgroup property is known to hold true.

\begin{theorem} \label{thm:known-csp}
  Let \(\mathbf{G}\) be a simply connected absolutely almost simple linear algebraic \(k\)-group which either is \(k\)-isotropic or has type
  \begin{itemize}
  \item \(B_l \ (l \ge 2)\),
  \item \(C_l \ (l \ge 2)\),
  \item \(D_l\ (l \ge 5)\),
  \item \(E_7, E_8, F_4, G_2\),
  \end{itemize}
  or let \(\mathbf{G}\) be the type \({}^2 A_{m-1}\) group \(\mathbf{G} = \mathbf{SU_m}(K,h)\) where \(h\) is a nondegenerate \(m\)-dimensional Hermitian form over a quadratic extension \(K/k\) with \(m \ge 3\).  Assume moreover that \(\sum_{v \in V_\infty(k)} \rank_{k_v} \!\mathbf{G} \ge 2\) and that \(k\) is not totally imaginary.  Then the congruence kernel \(C(k,\mathbf{G})\) has order at most two.  If \(\mathbf{G}\) is not topologically simply connected at some real place of \(k\), then \(C(k, \mathbf{G})\) is trivial.
\end{theorem}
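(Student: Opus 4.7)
The strategy is to recognize Theorem \ref{thm:known-csp} as a compilation of existing results on the congruence subgroup problem and to assemble the correct references rather than produce a self-contained argument. The guiding framework is the two-step reduction developed by Prasad and Rapinchuk: first establish that $C(k, \mathbf{G})$ is central in $\widehat{\mathbf{G}(k)}$, then identify the central congruence kernel with (a quotient of) the metaplectic kernel $M(S, \mathbf{G})$ for $S = V_\infty(k)$, whose order can be computed explicitly from the arithmetic of $k$.

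For centrality, I would cite the appropriate results case by case. In the $k$-isotropic higher-rank situation, this goes back to the work of Bass--Milnor--Serre and Matsumoto and was extended to general isotropic groups through the efforts of Raghunathan, Prasad and Rapinchuk. For the anisotropic cases in the list, centrality is due to Rapinchuk (with contributions by Tomanov) for the classical types $B_l, C_l, D_l\ (l \geq 5)$, and in the exceptional cases $E_7, E_8, F_4, G_2$ it follows from the combined work of Gopal Prasad and Rapinchuk. For the type ${}^2A_{m-1}$ special unitary groups $\mathbf{SU}_m(K,h)$ with $m \geq 3$, one invokes Tomanov's theorem directly.

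Given centrality, the bound $|C(k, \mathbf{G})| \leq 2$ follows from the Prasad--Rapinchuk computation of the metaplectic kernel. Because $k$ has at least one real place, the group of roots of unity relevant to that formula reduces to $\{\pm 1\}$, forcing $M(S, \mathbf{G})$ to be contained in $\mathbb{Z}/2$. The refinement that $C(k, \mathbf{G})$ is trivial whenever $\mathbf{G}$ fails to be topologically simply connected at some real place comes from the same computation: the potential order-two contribution corresponds precisely to a nontrivial topological double cover of $\mathbf{G}(k_v)$ at a real place $v$, and if that cover has already been absorbed into $\mathbf{G}(k_v)$ itself, the contribution vanishes.

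The main obstacle is really bookkeeping rather than new mathematics: one must match each case in the statement to a specific reference in the literature, paying attention to delicate issues such as the simply connected versus adjoint distinction and the fact that the CSP is not established in full generality for type $D_4$, which is why the condition $l \geq 5$ appears in the list, or for $\mathrm{SL}_n$ over a division algebra beyond the unitary situation controlled by Tomanov. A short survey in the appendix is the appropriate venue for these detailed citations, as the authors indicate.
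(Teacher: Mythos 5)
Your proposal correctly treats the theorem as a compilation of known results on the congruence subgroup problem and identifies the two-step strategy (centrality of $C(k,\mathbf{G})$, then computation of the Prasad--Rapinchuk metaplectic kernel $M(V_\infty(k),\mathbf{G})\subseteq\widehat{\mu}(k)$, which has order at most $2$ since $k$ is not totally imaginary) that underlies the cited literature. The paper itself gives no proof beyond pointing to the relevant theorems in Platonov--Rapinchuk, Gille's Kneser--Tits survey, and Prasad--Rapinchuk's metaplectic-kernel paper, so your reference-assembly approach matches the paper's.
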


The theorem is the essence of decades of research on the congruence subgroup problem.  References are \citelist{\cite{Platonov-Rapinchuk:algebraic-groups}*{Theorem~9.1, p.\,512, Theorem~9.5, p.\,513, Corollary~9.7, p.\,515, Theorems~9.23 and~9.24} \cite{Gille:kneser-tits} \cite{Prasad-Rapinchuk:computation}*{Main theorem}}.  A survey article providing extensive information on CSP can be found in~\cite{Prasad-Rapinchuk:developments-on-csp}.  The following result is another main tool for us.

\begin{theorem}\label{thm:toolbox}
Let $k$ be an algebraic number field and let $\alg{G}$ be a simply connected simple linear algebraic group over $k$. 
Let $\Gamma \subseteq \alg{G}(k)$ be an arithmetic subgroup.
Assume that $G_\infty = \prod_{v \in V_\infty(k)} \alg{G}(k_v)$ is not compact.
\begin{enumerate}[ (a)]
\item\label{it:lattices} $\Gamma \subseteq G_\infty$ is a lattice. If $\alg{G}(k_v)$ is compact for some $v \in V_\infty(k)$, then $\Gamma$ is cocompact.
\item\label{it:profinite-comm} Assume that $\alg{G}$ has CSP.
	Then the profinite completion $\widehat{\Gamma}$ is commensurable with the open compact subgroups of $\alg{G}(\bbA_k^f)$.
\end{enumerate}
\end{theorem}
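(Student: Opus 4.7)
The plan is to treat the two parts separately, with part (a) following classical work on arithmetic lattices and part (b) combining strong approximation with the hypothesis of CSP.

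For part (a), I would pass to the restriction of scalars $\alg{H} = \operatorname{Res}_{k/\Q}\alg{G}$, so that $\alg{H}(\Q) = \alg{G}(k)$ and $\alg{H}(\R) = G_\infty$ canonically, and arithmetic subgroups of $\alg{G}(k)$ correspond to arithmetic subgroups of $\alg{H}(\Q)$. Since $\alg{H}$ is a semisimple $\Q$-group, the Borel--Harish-Chandra theorem applied to $\alg{H}$ yields that $\Gamma$ is a lattice in $G_\infty$. For the cocompactness assertion I would invoke Godement's compactness criterion: if some $\alg{G}(k_v)$ is compact at an archimedean place $v$, then $\alg{G}$ is $k_v$-anisotropic, and since a $k$-isotropic semisimple group is $k_v$-isotropic at every place, $\alg{G}$ must be $k$-anisotropic. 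Consequently $\alg{H}$ is $\Q$-anisotropic, and Godement's criterion gives that $\Gamma\backslash G_\infty$ is compact.

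For part (b), the two main inputs are strong approximation and CSP. Since $\alg{G}$ is simply connected simple with $G_\infty$ noncompact, the Kneser--Platonov strong approximation theorem ensures that $\alg{G}(k)$ is dense in $\alg{G}(\bbA_k^f)$. Fixing an integral model of $\alg{G}$ and letting $K_0 = \prod_{v \in V_f(k)} \alg{G}(\calO_{k_v})$ be the resulting open compact subgroup of $\alg{G}(\bbA_k^f)$, strong approximation implies that $\alg{G}(\calO_k) = \alg{G}(k) \cap K_0$ is dense in $K_0$, and more generally every arithmetic subgroup $\Gamma$ has closure $\overline{\Gamma}$ in $\alg{G}(\bbA_k^f)$ which is a compact open subgroup of $\alg{G}(\bbA_k^f)$ and hence commensurable with every other such subgroup. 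Finiteness of the congruence kernel $C(k,\alg{G})$ then yields a continuous surjection $\widehat{\Gamma} \twoheadrightarrow \overline{\Gamma}$ with finite kernel $C$. Choosing an open normal subgroup $N \trianglelefteq \widehat{\Gamma}$ with $N \cap C = \{1\}$ (possible because $\widehat{\Gamma}$ is profinite and $C$ finite), $N$ maps isomorphically onto an open finite-index subgroup of $\overline{\Gamma}$, which exhibits the desired commensurability of $\widehat{\Gamma}$ with the open compact subgroups of $\alg{G}(\bbA_k^f)$.

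No single step is a genuine obstacle: the main challenge lies in assembling the correct classical references (Borel--Harish-Chandra, Godement's compactness criterion, Kneser--Platonov strong approximation) in the generality stated, and in being careful that the version of CSP invoked is phrased in terms of the appropriate completions. Once these ingredients are cited in their proper forms, the verification is routine bookkeeping.
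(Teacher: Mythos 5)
Your argument is correct and essentially matches the paper's proof: part (a) is the Borel--Harish-Chandra theorem (the paper simply cites it without passing explicitly through restriction of scalars or Godement's criterion, but those are the standard ingredients behind the citation), and part (b) combines CSP (finite congruence kernel) with strong approximation (the map $\widehat{\Gamma}\to\alg{G}(\bbA_k^f)$ has open image). The only cosmetic difference is where you absorb the finite kernel: the paper replaces $\Gamma$ by a finite-index subgroup so that $\varphi$ becomes injective, whereas you pick an open normal subgroup $N\trianglelefteq\widehat{\Gamma}$ with $N\cap C=\{1\}$ and use that $N$ maps isomorphically onto an open compact subgroup of $\overline{\Gamma}$; these are the same move viewed from the arithmetic and the profinite side respectively.
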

\begin{proof}
  Part \eqref{it:lattices} is a famous result of Borel and Harish-Chandra \cite{Borel-Harish-Chandra}.  To prove part \eqref{it:profinite-comm}, note that by CSP, the kernel of the map $\varphi\colon \widehat{\Gamma} \to \alg{G}(\bbA_k^f)$ is finite.  Passing to a finite index subgroup of $\Gamma$, we can assume that $\varphi$ is injective. The strong approximation theorem holds since $\alg{G}$ is simply connected \cite[Theorem 7.12]{Platonov-Rapinchuk:algebraic-groups} and implies that the image of $\varphi$ is open (and compact since $\widehat{\Gamma}$ is compact).
\end{proof}

Finally, we will need to know that a collection of local isomorphisms of algebraic groups assembles to an adelic isomorphism.

\begin{lemma}\label{lem:iso-adelic}
Let $k$ be an algebraic number field and let $\alg{G}$, $\alg{H}$ be 
two semi-simple linear algebraic groups over $k$.
If $\alg{G}$, $\alg{H}$ are isomorphic at all finite places, i.e., $\alg{G} \times_k k_v \cong \alg{H} \times_k k_v$ for all $v \in V_f(k)$, then
\[
	\alg{G}(\bbA_k^f) \cong \alg{H}(\bbA_k^f)
\]
as topological groups.
\end{lemma}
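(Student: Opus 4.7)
The plan is to realize the adelic isomorphism as a product of suitably chosen local isomorphisms after matching the integral structures. Fix a finite set $S \subset V(k)$ containing $V_\infty(k)$ large enough that $\alg{G}$ and $\alg{H}$ extend to reductive group schemes $\mathcal{G}$ and $\mathcal{H}$ over the ring of $S$-integers $\calO_{k,S}$, and such that both groups are unramified at every $v \notin S$; this is possible because any semi-simple $k$-group has good and unramified reduction outside a finite set of places. For every such $v$, the subgroups $\mathcal{G}(\calO_v) \subset \alg{G}(k_v)$ and $\mathcal{H}(\calO_v) \subset \alg{H}(k_v)$ are hyperspecial maximal compact subgroups, and $\alg{G}(\bbA_k^f)$ is identified as a topological group with the restricted direct product $\prod_{v \in V_f(k)}^{\prime} \alg{G}(k_v)$ taken with respect to these distinguished compact subgroups, and analogously for $\alg{H}$.

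For each $v \in V_f(k)$, fix a $k_v$-algebraic isomorphism $\varphi_v \colon \alg{G} \times_k k_v \xrightarrow{\cong} \alg{H} \times_k k_v$, which induces a topological group isomorphism on $k_v$-points. For $v \notin S$ the image $\varphi_v(\mathcal{G}(\calO_v))$ is a hyperspecial maximal compact subgroup of $\alg{H}(k_v)$. By Bruhat--Tits theory, the hyperspecial subgroups of $\alg{H}(k_v)$ form a single orbit under the group of $k_v$-algebraic automorphisms of $\alg{H}$. Thus, after post-composing $\varphi_v$ with a suitable $k_v$-automorphism of $\alg{H}$, I arrange $\varphi_v(\mathcal{G}(\calO_v)) = \mathcal{H}(\calO_v)$. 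Only the finitely many places in $S \cap V_f(k)$ require no such adjustment.

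The family $\{\varphi_v\}_{v \in V_f(k)}$ then induces a bijective group homomorphism $\alg{G}(\bbA_k^f) \to \alg{H}(\bbA_k^f)$ between the restricted products. It is a homeomorphism because, at all but finitely many $v$, the modified $\varphi_v$ identifies the distinguished compact open subgroups, sending a neighbourhood basis of the identity to a neighbourhood basis; on any standard open $\prod_{v \in T} \alg{G}(k_v) \times \prod_{v \notin T} \mathcal{G}(\calO_v)$ with $T \supseteq S$ finite, the map restricts to a finite product of topological isomorphisms. The main obstacle is the compatibility step at places outside $S$: showing that $\varphi_v(\mathcal{G}(\calO_v))$ can be moved onto $\mathcal{H}(\calO_v)$ by a $k_v$-algebraic automorphism of $\alg{H}$ is the only nontrivial ingredient and relies on the Bruhat--Tits structure theory of reductive groups over local fields, with appropriate care for the action of diagram automorphisms in the semi-simple but not necessarily simply connected case.
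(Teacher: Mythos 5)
Your proof is correct and follows essentially the same route as the paper: pick integral models, observe that the local compact subgroups are hyperspecial at almost all places, invoke Bruhat--Tits conjugacy of hyperspecial subgroups to match them, and then assemble the local isomorphisms into an isomorphism of restricted products. Your write-up is in fact slightly more careful than the paper's, which states that the hyperspecial subgroups are ``isomorphic'' and immediately passes to the direct limit, whereas you explicitly adjust each $\varphi_v$ so that the local isomorphism carries $\mathcal{G}(\calO_v)$ onto $\mathcal{H}(\calO_v)$, which is what makes the restricted-product (equivalently direct-limit) argument run.
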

\begin{proof}
By assumption the topological groups $\alg{G}(k_v)$ and $\alg{H}(k_v)$ are isomorphic at all finite places $v$ of $k$.
We pick models of $\alg{G}$, $\alg{H}$ over the ring of integers $\mathcal{O}_k$ of $k$. Then for all but finitely many places $v \in V_f(k)$, the compact subgroups $\alg{G}(\mathcal{O}_{k,v})$, $\alg{H}(\mathcal{O}_{k,v})$ are hyperspecial \cite[3.9.1]{Tits:Reductive} and hence isomorphic \cite[2.5]{Tits:Reductive}.
We deduce that
\begin{align*}
	\alg{G}(\bbA_k^f) &= \varinjlim_S \prod_{v \in S} \alg{G}(k_v) \times \prod_{v \not\in S} \alg{G}(\mathcal{O}_{k,v}) \\
	&\cong \varinjlim_S \prod_{v \in S} \alg{H}(k_v) \times \prod_{v \not\in S} \alg{H}(\mathcal{O}_{k,v}) \cong \alg{H}(\bbA_k^f)
\end{align*}
where the direct limit runs over all finite sets \(S\) of finite places of $k$.
\end{proof}

\subsection{Margulis superrigidity}
Margulis superrigidity will be used to show that certain arithmetic lattices are not abstractly commensurable.  %More precisely, we will use:
\begin{theorem}[Margulis]\label{thm:superrigidity}
Let $k_1$, $k_2$ be number fields and let $\alg{G_1},\alg{G_2}$ be simply connected, absolutely almost simple linear algebraic groups over $k_1$ and $k_2$ respectively.  Assume that $\sum_{v\in V_{\infty}(k_j)} \mathrm{rk}_{k_j}(G_j(k_{j,v})) \geq 2$ for all $j \in \{1,2\}$.  The arithmetic subgroups $\Gamma_1 \subset \alg{G_1}(k_1)$ and $\Gamma_2 \subset \alg{G_2}(k_2)$ are commensurable if and only if there is an isomorphism of fields $\sigma: k_1 \to k_2$ and a \(k_2\)-isomorphism of algebraic groups $\eta\colon {}^\sigma\alg{G_1} \to \alg{G_2}$.
\end{theorem}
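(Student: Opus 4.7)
The ``if'' direction is the easy half. Given $\sigma$ and $\eta$, the composition
\[
\alg{G_1}(k_1) \xrightarrow{\;{}^\sigma\;} {}^\sigma\!\alg{G_1}(k_2) \xrightarrow{\;\eta\;} \alg{G_2}(k_2)
\]
is an isomorphism of abstract groups sending $k_1$-points to $k_2$-points and transporting $\calO_{k_1}$-lattices to $\calO_{k_2}$-lattices. Hence the image of $\Gamma_1$ is an arithmetic subgroup of $\alg{G_2}(k_2)$, and all arithmetic subgroups of a given absolutely almost simple simply connected $k_2$-group are pairwise commensurable (Borel--Harish-Chandra), so the image is commensurable with $\Gamma_2$.

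For the nontrivial direction, I would assume without loss of generality that there is an abstract isomorphism $\phi \colon \Gamma'_1 \xrightarrow{\sim} \Gamma'_2$ between finite-index subgroups $\Gamma'_j \subseteq \Gamma_j$. Composed with the inclusion $\Gamma'_2 \hookrightarrow \alg{G_2}(k_2)$, this gives a homomorphism $\Gamma'_1 \to \alg{G_2}(k_2)$ whose image is Zariski dense in $\alg{G_2}$ by Borel density (each $\Gamma'_j$ is an irreducible lattice in the noncompact semisimple group $G_\infty$, and $\alg{G_j}$ is almost simple). This is exactly the situation to which Margulis superrigidity applies, because the higher rank hypothesis $\sum_v \rank_{k_1,v}\alg{G_1} \ge 2$ ensures that $\Gamma'_1$ is higher rank.

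Margulis superrigidity (in its algebraic formulation, cf.\ Margulis' book, Chapter~VIII, or Zimmer, Ch.~6) then yields a continuous field embedding $\sigma \colon k_1 \hookrightarrow \C$ together with a $\C$-rational morphism $\tilde\eta \colon {}^\sigma\!\alg{G_1} \to \alg{G_2}$ defined over $\sigma(k_1)\cdot k_2$ which agrees with $\phi$ on a finite-index subgroup of $\Gamma'_1$ (after embedding $\Gamma'_1$ into ${}^\sigma\!\alg{G_1}$ via $\sigma$). To see that $\sigma$ can be arranged to land in $k_2$ and moreover be surjective onto $k_2$, I would use the trace/adjoint trace field characterization of the field of definition of an arithmetic lattice: the image $\phi(\Gamma'_1)=\Gamma'_2$ has trace field $k_2$, and the same trace field is read off from $\tilde\eta(\Gamma'_1)$ as $\sigma(k_1)$; since the trace field of $\Gamma_1$ is $k_1$, the embedding $\sigma$ is a field isomorphism onto $k_2$. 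Setting $\eta = \tilde\eta$, we now have a $k_2$-morphism ${}^\sigma\!\alg{G_1} \to \alg{G_2}$ with Zariski dense image.

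It remains to upgrade $\eta$ from a $k_2$-morphism with Zariski dense image to a $k_2$-isomorphism. The image is a normal algebraic subgroup (being Zariski dense and stable under conjugation by the Zariski-dense subgroup $\Gamma'_2$), so by absolute almost simplicity of $\alg{G_2}$ it is either trivial or all of $\alg{G_2}$; the nontriviality forces surjectivity. Dually, $\ker\eta$ is a proper normal $k_2$-subgroup of the absolutely almost simple group ${}^\sigma\!\alg{G_1}$, hence finite and central, so $\eta$ is a central isogeny. As $\alg{G_2}$ is simply connected, the isogeny $\eta$ must be an isomorphism. The main obstacle is really the invocation of Margulis superrigidity itself and the bookkeeping of the field $\sigma$; conceptually one is using that the abstract commensurability of $\Gamma_1, \Gamma_2$ forces the algebro-geometric data over the number fields to match up, and this matching is exactly what superrigidity delivers.
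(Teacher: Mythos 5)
Your proposal is essentially correct and follows the same strategy as the paper: both directions are handled the same way, with Margulis superrigidity providing the field embedding and the morphism of algebraic groups, and simply connectedness upgrading the morphism to an isomorphism. The one place where you take a genuinely different route is in establishing that $\sigma$ is an \emph{isomorphism} (not merely an embedding) and that $\eta$ is surjective. The paper's proof is cleaner here: it simply applies the same superrigidity theorem to $\delta^{-1}\colon\Gamma_2 \to \Gamma_1$, obtaining a field embedding $\sigma'\colon k_2\to k_1$ and an epimorphism $\eta'$, and then the composites $\sigma'\circ\sigma$ and $\eta'\circ\eta$ force both maps to be isomorphisms (a number field has no proper subfield isomorphic to itself, and a composite of isogenies between simply connected groups must be an isomorphism). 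You instead appeal to the trace-field characterization of the field of definition of an arithmetic lattice; this can be made to work, but as stated it is vague — in particular, you assert that $\sigma(k_1)$ ``is read off from $\tilde\eta(\Gamma'_1)$'' while your $\tilde\eta$ is only defined over $\sigma(k_1)\cdot k_2$, so extracting exactly $\sigma(k_1)$ requires an extra argument that you have not supplied. The symmetry trick sidesteps this bookkeeping entirely.

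A small technical remark on your surjectivity argument for $\eta$: you do not need to argue that the image is a normal subgroup stable under conjugation by $\Gamma'_2$. The image of a morphism of algebraic groups is already Zariski closed, and a Zariski-closed subgroup of $\alg{G_2}$ containing the Zariski-dense subset $\Gamma'_2$ is all of $\alg{G_2}$. Your phrasing suggests the image might be a proper nontrivial normal subgroup, which is harmless since almost simplicity rules it out, but the closedness of the image is the cleaner fact. Everything else --- the kernel being finite central by absolute almost simplicity, and the conclusion from simply connectedness --- matches the intended argument.
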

\begin{proof}
After passing to finite index subgroups, we can assume that
there exists an isomorphism \(\delta \colon \Gamma_1 \xrightarrow{\cong} \Gamma_2\).  Then by Margulis superrigidity~\cite{Margulis:discrete-subgroups}*{Theorem~(C), p.\,259}, there exists \(\sigma \colon k_1 \to k_2\) and an epimorphism \(\eta \colon {}^\sigma \alg{G_1} \rightarrow \alg{G_2}\) such that $\delta$ agrees with $\eta$ on a finite index subgroup of~$\Gamma_1$. Without loss of generality $\delta = \eta|_{\Gamma_1}$.  The same argument applied to $\delta^{-1}$ implies that $\sigma$ is an isomorphism and further (since $\alg{G_2}$ is simply connected) that $\eta$ is an isomorphism.

Conversely, if $\sigma$ and $\eta$ exist, then $\Gamma_1$ and $\Gamma_2$ are (isomorphic to) arithmetic subgroups of the same algebraic group and are thus commensurable.
\end{proof}

\section{Proof of Theorem~\ref{thm:nonrigidity}} \label{sec:proof-nonrigidity}

In this section, \(k\) denotes a number field with \(n \coloneqq [k : \Q] > 2\) and with trivial automorphism group.  We will say \(k\) has type~I if it is totally real; see Lemma~\ref{lem:existence}.  In this case \(k_1, \dots, k_n\) denote the completions of \(k\) at the archimedean places.  We will say \(k\) has type~II if it has exactly one complex place; see Lemma~\ref{lem:existence-c}.  In this case \(k_1, \dots, k_{n-1}\) denote the completions at the archimedean places of \(k\) with \(k_1 \cong \C\).

\subsection{Reduction to \(G = \alg{G}(\R)\) or \(G = \alg{G}(\C)\).} As a first step of the proof of Theorem~\ref{thm:nonrigidity}, we observe that we may assume that the Lie group \(G\) is the group of real or complex points of a simply connected simple algebraic group.  To this end, let $G$ be a connected simple Lie group with finite center.  Let \(\mathfrak{g}\) be the Lie algebra of \(G\).  Since the center $Z(G)$ of $G$ is finite and the adjoint group $G/Z(G)$, being a subgroup of $\Aut(\mathfrak{g})$, is linear, we may assume by Lemma \ref{lem:going-up-going-down} that $G$ has trivial center.

If \(\mathfrak{g} \otimes_\R \C\) is simple, then the linear algebraic \(\R\)-group $\alg{G}_{\text{ad}} = \Aut_\bbR(\mathfrak{g})$ is absolutely simple and satisfies $\alg{G}_{\text{ad}}(\bbR)^0 = G$.  Let $\alg{G}$ be the simply connected covering of $\alg{G}_{\text{ad}}$.  By Lemma \ref{lem:going-up-going-down} it is sufficient to show for every \(n\) that $\alg{G}(\bbR)$ has $n$ profinitely isomorphic lattices which are not commensurable.

If \(\mathfrak{g} \otimes_\R \C\) is not simple, then \(\mathfrak{g}\) possesses a complex structure which turns it into a simple Lie algebra over \(\C\).  For any such structure, the \(\C\)-group \(\alg{G}_{\text{ad}} = \Aut_\bbC(\mathfrak{g})\) is (absolutely) simple and satisfies \(\alg{G}_{\text{ad}}(\C) = G\).  Again \(\alg{G}\) will denote the simply connected covering group of \(\alg{G}_{\text{ad}}\) and it is enough to find \(n\) non-commensurable profinitely isomorphic lattices in \(\alg{G}(\C)\).

With these remarks, we associated to each local isomorphism class of Lie groups \(G\) as in Theorem~\ref{thm:nonrigidity} a connected simply connected absolutely almost simple linear algebraic \(\mathbb{K}\)-group \(\mathbf{G}\) with \(\mathbb{K} = \R\) or \(\C\) which is unique up to \(\mathbb{K}\)-isomorphism.  Our task now is to find \(n\) profinitely commensurable lattices in \(\mathbf{G}(\mathbb{K})\) which are not commensurable.  The common intersection of the profinite completions then corresponds to non-commensurable lattices which are profinitely isomorphic, see~\cite{Ribes-Zalesskii:profinite-groups}*{Proposition~3.2.2, p.\,80} and, for instance, \cite{Kammeyer:l2-invariants}*{Proposition~6.39, p.\,159}.

\subsection{Overview of the proof.} \label{sec:overview}

Let us first assume that \(\mathbb{K} = \R\), in which case we take \(k\) of type~I.  The general idea of the proof in this case was already outlined in the introduction.  At this point we have to point out, however, that the argument only goes through provided the following two requirements are met.

{\begin{enumerate}[(i)]
    \setlength{\itemsep}{5pt}
  \item \label{item:inner-outer}  We need to assume that $\alg{G}$ and the \(\R\)-anisotropic real form $\alg{G}^{u}$ with \(\mathbf{G} \times_\R \C \cong_\C \mathbf{G}^u \times_\R \C\) are inner forms of each other. This is automatic unless the Dynkin diagram has symmetries, meaning \(\alg{G}\) has type \(A_m\), \(D_m\), or \(E_6\).  In these cases, one can read off from the Tits indices~\cite{Tits:classification}*{Table~II}, whether the condition is satisfied: In type \(A_m\),  the condition fails for \(\mathrm{SL}_{m+1}(\R)\) and \(\mathrm{SL}_{m+1}(\mathbb{H})\), while the groups \(\mathrm{SU}(r,s)\) with \(r+s = m+1\) satisfy this requirement.  In type \(D_m\), the groups \(\mathrm{SO}^*(2m) = \mathrm{SO}(m, \mathbb{H})\) are inner twists of the compact form \(\mathrm{SO}(2m)\).  For the groups \(\mathrm{SO}^0(r,s)\) with \(r+s = 2m\), the condition is satisfied if \(r\) and \(s\) are even and fails if \(r\) and \(s\) are odd.  Finally, in type \(E_6\), the condition is satisfied for \(E_{6(2)}\) and \(E_{6(-14)}\) and fails for \(E_{6(6)}\) and \(E_{6(-26)}\).

\item \label{item:csp-open} We need that \(k\)-anisotropic forms of $\alg{G}$ defined over $k$ satisfy the congruence subgroup property. This is still generally open in type $A_m$, $D_4$ and $E_6$. 
\end{enumerate}
}

Property~\eqref{item:csp-open} forces us to exclude type \(E_6\) altogether.  In type \(A_m\), however, CSP is known for special unitary groups $\alg{SU}(K/k,h)$ of hermitian forms \(h\) over a quadratic field extension \(K/k\) as we stated in Theorem~\ref{thm:known-csp}.  This allows us to prove Theorem~\ref{thm:nonrigidity} for the groups \(\mathrm{SU}(r,s)\) in Section~\ref{subsection:an}.  There we also present a workaround that allows us to include the groups \(\mathrm{SL}_{2m+1}(\mathbb{R})\) in spite of the failure of property~\eqref{item:inner-outer}.

Similarly, Kneser~\cite{Kneser:normalteiler} has shown that CSP holds for spinor groups.  Since there are no triality phenomena over \(\R\), we can use his result to cover all the groups of type $D_4$.  Note also that property~\eqref{item:inner-outer} fails in some \(D_m\) cases with \(m \ge 4\).  Therefore, we will sort out the type \(D_m\) groups with \(m \ge 4\) separately in Section~\ref{subsection:dn}.

With the special cases taken care of, we treat the remaining \(\R\)-groups of type \(B_m\), \(C_m\), \(E_7\), \(E_8\), \(F_4\), and \(G_2\) in Section~\ref{subsection:real-forms}.  For all these, the general strategy applies because \eqref{item:inner-outer} and~\eqref{item:csp-open} are satisfied.

Finally, we give the proof of Theorem~\ref{thm:nonrigidity} for \(\mathbb{K} = \C\) and \(\mathbf{G}\) of type \(B_m\), \(C_m\), \(D_m\) with \(m \ge 5\), and \(E_7\) in Section~\ref{subsection:complex-forms}.  Also in the complex case, type \(A_m\) and \(D_4\) need special attention because of the incomplete status of CSP.  The type \(D_4\) group \(\mathrm{SO}_8(\C)\) can again be covered by Kneser's result so that it was more convenient to include it in Section~\ref{subsection:dn}.  A similar trick as for \(\mathrm{SL}_{2m}(\R)\) also allows us to cover the type \(A_{2m-1}\) group \(\mathrm{SL}_{2m}(\C)\).  This argument is included in Section~\ref{subsection:an}.

\subsection{Type $A_\bullet$: $\mathrm{SU}(r,s)$, $\mathrm{SL}_{2m}(\bbR)$ and $\mathrm{SL}_{2m}(\bbC)$} \label{subsection:an}
As a first instance we consider the case when \(G\) is either of the groups $\mathrm{SU}(r,s)$ with $r,s \geq 2$, $\mathrm{SL}_{2m}(\bbR)$ and $\mathrm{SL}_{2m}(\bbC)$ with $m \geq 2$.  In these concrete examples it is instructive how local-global principles are key to our investigation.
 
It is well-known that hermitian forms for the extension $\bbC/\bbR$ are classified by dimension and signature.  We also recall that hermitian forms for a quadratic extension $E/F$ of $p$-adic fields are classified by dimension and discriminant $d \in \{\pm 1\}$ (i.e, $d=1$ exactly if the determinant lies in the image of the norm $\mathrm{N}_{E/F} \colon E^\times\to F^\times$).  

Fix a dimension $m \geq 2$. Suppose that $K= k(\sqrt{a})$ is a quadratic extension of $k$.  Let $V_{\infty}^{K}(k)$ denote the set of non-split archimedean places of $k$, i.e., the set of real places \(v\) where $a$ is negative with respect to the embedding $k \to k_v$.  We will use the following result of Landherr \cite{Landherr} on the existence of hermitian forms with prescribed local properties: There exists a $K/k$-hermitian form of dimension $m$ with
signature $(r_i,s_i)$ (with $r_i+s_i = m$) at $v_i \in V_{\infty}^{K}(k)$ and
discriminant $d_v \in \{\pm 1\}$ at $v \in V_f(k)$ exactly if
 \begin{itemize}
  \item  $d_v = 1$ for almost all $v \in V_f(k)$,
  \item $d_v = 1$ whenever $v$ splits in $K$, and
  \item $\prod_{v_i \in V_{\infty}^{\text{K}}(k)} (-1)^{s_i} \prod_{v \in V_f(k)} d_v = 1.$
 \end{itemize}
 The hermitian form is uniquely determined by this data. 

\subsubsection{$\SU(r,s)$ with $r,s \geq 2$.}  
We take $k$ of type~I and fix some $w^0 \in V_f(k)$. By weak approximation, there is an element $a \in k^\times$ which is negative at all real places and is 
a non-square in $k_{w^0}$.
Define  $K = k(\sqrt{a})$ and observe that by construction all archimedean places are non-split. In addition, the finite place $w^0$ does not split in $K$. Using the result of Landherr, there is a unique hermitian form $h_j$ such that
\begin{enumerate}[(i)]
\item the signature at the $j$-th real place is $(r,s)$,
\item the signature at all other real places is $(r+s,0)$,
\item the discriminant at $w^0$ is $d_{w^0} = (-1)^s$, and
\item the discriminant is $d_v = 1$ for every $v \in V_f(k)\setminus\{w^0\}$.
\end{enumerate} 
These hermitian forms define linear algebraic groups  \(\mathbf{SU}(h_j) \) over $k$ and
by Theorem \ref{thm:toolbox} \eqref{it:lattices} arithmetic subgroups \(\Gamma_j \subset \mathbf{SU}(h_j)(k)\) which are cocompact lattices in \(\alg{SU}(h_j)(k_j) = \SU(r,s)\).  These lattices have a congruence kernel of order at most two by Theorem~\ref{thm:known-csp} (Since the compact Lie group \(\mathbf{SU}(r+s)\) is topologically simply connected, the congruence kernel might however be nontrivial.)  In any case, by Lemma \ref{lem:iso-adelic}, the groups \(\mathbf{SU}(h_j)(\mathbb{A}_k^f)\) are all isomorphic because the hermitian forms \(h_j\) are isometric at each finite place.  It follows immediately from \ref{thm:toolbox} \eqref{it:profinite-comm} that $\Gamma_i$ and $\Gamma_j$ are pairwise profinitely commensurable. Finally, we observe that the groups $\alg{SU}(h_j)$ are not isomorphic. By construction $k$ has no non-trivial automorphisms and therefore superrigidity via Theorem \ref{thm:superrigidity} (we recall that $r,s \geq 2$) implies that  $\Gamma_i$ and $\Gamma_j$ are not commensurable for $i \neq j$.

\subsubsection{$\SL_{2m}(\bbR)$ and $\SL_{2m}(\bbC)$ for $m \geq 2$.}
Let $\mathbb{K}$ denote either $\bbR$ or $\bbC$ and let $G = \SL_{2m}(\mathbb{K})$.
If $\mathbb{K} = \bbR$ we take $k$ of type~I and otherwise of type~II. Pick a rational prime number $p$ which splits completely in $k$ and let $w_1,\dots, w_n$ denote the places of $k$ dividing $p$, i.e.,  $k_{w_i}  \cong \bbQ_p$. We fix an additional finite place $w^0 \in V_f(k)$ which is distinct from $w_1,\dots,w_n$.   By weak approximation, there is an element $a \in k^\times$ which satisfies: $a$ is negative in $k_i$ for all $i\geq 2$, represents a prescribed non-square element $x \in \bbQ^{\times}_p$ modulo squares at the places $w_1, \dots, w_n$ and is a non-square at $w^0$.
If $\mathbb{K} = \bbR$ we can arrange that, in addition, $a$ is positive in $k_1$.
Define $K = k(\sqrt{a})$. Then $V_\infty^K(k)$ contains all but the first archimedean places. By construction, the places $w_1,\dots, w_n, w^0$ are not split in $K$ and the quadratic extensions $K_{w_i}/k_{w_i}$ are isomorphic to $\bbQ_p(\sqrt{x})/\bbQ_p$.
Using the result of Landherr, there is a unique hermitian form $h_j$ such that
\begin{enumerate}[(i)]
\item the signature at $k_2,\dots,k_n$ is $(r+s,0)$, 
\item the discriminant at $w^0$ and $w_j$ equals $-1$, and
\item the discriminant is $d_v = 1$ for every $v \in V_f(k) \setminus\{w_j,w^0\}$.\end{enumerate} 
By construction, $\alg{SU}(h_j)(k_i) \cong \SU(2m)$ for all $i \geq 2$.  We observe that $\alg{SU}(h_j)(k_1) \cong \SL_{2m}(\mathbb{K})$ due to the choices of $k$ and $a$. The groups $\alg{SU}(h_j)$ are pairwise non-isomorphic, since they are non-isomorphic at one of the finite places $w_1,\dots, w_n$.  Here it is essential that $2m$ is even; only under this assumption the classification \cite[4.4]{Tits:Reductive} entails that non-isomorphic hermitian forms have non-isomorphic special unitary groups.  We claim that the topological groups $\alg{SU}(h_j)(\bbA^f_k) \cong \alg{SU}(h_i)(\bbA_k^f)$ are isomorphic for all $i, j$. Indeed,  the groups $\alg{SU}(h_j)$ are isomorphic at all finite places except for $w_1, \dots, w_n$; here a permutation of the places $w_1,\dots, w_n$ yields an isomorphism
\[
	 \prod_{\ell = 1}^n \alg{SU}(h_j)(k_{w_\ell}) \cong  \prod_{\ell = 1}^n \alg{SU}(h_i)(k_{w_\ell}).
\]
The argument used in the proof of Lemma \ref{lem:iso-adelic} implies $\alg{SU}(h_j)(\bbA^f_k) \cong \alg{SU}(h_i)(\bbA_k^f)$ and we can proceed as above to obtain the lattices $\Gamma_1, \dots, \Gamma_n$ as arithmetic subgroups of $\Gamma_j \subseteq \alg{SU}(h_j)(k)$.

\subsection{Type $D_\bullet$:  \texorpdfstring{$\SO^0(r,s)$}{SO(r,s)} with \texorpdfstring{$r+s$}{r+s} even  and \texorpdfstring{\(r+s \ge 8\)}{r+s >= 8} and \texorpdfstring{\(\SO_8(\C)\)}{SO8(C)}.} \label{subsection:dn}

\subsubsection{$\SO^0(r,s)$ with \(r+s\) even and $r+s \ge 8$.} The argument here is similar to the argument above for $\SU(r,s)$, now using the corresponding local-global principle for quadratic forms.
Recall that quadratic forms over $\bbR$ are classified by their dimension and signature; quadratic forms over $p$-adic fields
are classified by dimension, determinant (modulo squares) and the Hasse invariant (see \cite[6.\S 4]{Scharlau:quadratic-and-hermitian-forms}). 

Let \(k\) be of type~I. Take an odd prime number $p$ which completely splits in $k$ and let $w_1,\dots,w_n$ denote the finite places of $k$ dividing~$p$.
We fix an additional finite place $w^0 \in V_f(k)$  which is distinct from $w_1,\dots,w_n$.
We will use \cite[Ch.~6, Theorem 6.10]{Scharlau:quadratic-and-hermitian-forms} to construct quadratic forms $q_1,\dots,q_n$ of dimension $r+s$ over $k$  such that
\begin{itemize}
\item $q_j$ has signature $(r,s)$ at the first real place but is positive definite over all other real places,
\item $q_1, \dots, q_n$ are isometric at every finite place outside \(w_1, \dots, w_n\),
\item $q_i$ is non-split at the place $w_i$, but split at $w_j$ for all $j \neq i$.
\end{itemize}
In addition, we can achieve that $\alg{Spin}(q_i)(k_{w_i}) \cong \alg{Spin}(q_j)(k_{w_j})$ as topological groups for all $i, j$, which entails 
\[
\alg{Spin}(q_i)(\bbA_k^f) \cong \alg{Spin}(q_j)(\bbA_k^f)
\]
using the argument of Lemma \ref{lem:iso-adelic}.
By a result of Kneser, the groups $\alg{Spin}(q_j)$ have CSP; see \cite[11.1]{Kneser:normalteiler}.  As before, it follows from Theorems \ref{thm:toolbox} and \ref{thm:superrigidity} that arithmetic subgroups of the algebraic groups $\alg{Spin}(q_j)$ give rise to profinitely commensurable cocompact lattices in $\SO^0(r,s)$ which are not commensurable.

\medskip

By weak approximation, there is an element $a \in k^\times$ such that $(-1)^s a$ is positive at the first real place, $a$ is positive at all other real places and such that $(-1)^{(r+s)/2}a$ is a square in $k_{w_i}$ for all $i \in \{1,2,\dots,n\}$.  We define $q_j$ to be the unique form of determinant $a$ which has signature $(r,s)$ at the first real place, is positive definite at all other real places, has Hasse invariant $-1$ at $w_j$, and has Hasse invariant~1 at the finite places not equal to \(w^0, w_j\).  The Hasse invariant at \(w^0\) is then determined by the product formula.  It depends on \(r\), \(s\), and \(n\), but not on \(j\).

\medskip
We observe that $q_j$ is split at $w_i$ for all $i \neq j$, since \(q_j\) has $r+s$ variables, and the determinant $(-1)^{(r+s)/2}$ (i.e., $a$ modulo squares) and the Hasse invariant \(1\) are equal to the determinant and Hasse invariant of the \(\frac{r+s}{2}\)-fold orthogonal sum of hyperbolic planes \(\langle 1, -1 \rangle\).  On the other hand,  $q_j$ is non-split at $w_j$, because the Hasse invariant of $q_j$ and the split form differ.  We observe that modulo the canonical isomorphism $k_{w_i} \cong \bbQ_p \cong k_{w_j}$ the forms are isometric and hence the groups $\mathbf{Spin}(q_i)(k_{w_i})$ and $\mathbf{Spin}(q_j)(k_{w_j})$ are isomorphic as topological groups.  However, for $i\neq j$ the Witt indices of \(q_i\) and \(q_j\) at \(w_j\) differ, hence the $k_{w_j}$-rank of the groups $\mathbf{Spin}(q_i)$ and $\mathbf{Spin}(q_j)$ are different.  So these algebraic groups are not isomorphic over $k$.  Note that our discussion also covers the group \(G = \SO^*(8)\) which happens to be locally isomorphic to \(\SO^0(6,2)\).

\subsubsection{\(\SO_{8}(\C)\).}

Let \(k\) be of type~II.  For \(i = 1, \ldots, n\), there exists a non-degenerate quadratic \(k\)-form \(q_i\) of rank eight, with trivial determinant, such that \(q_i\) is anisotropic at all real places of \(k\), such that it has Hasse invariant \(-1\) at \(w_i\) and \(w^0\), and such that it has Hasse invariant \(1\) elsewhere.  Over \(\Q_p\), every nondegenerate quadratic form in at least four variables represents~1.  For \(j = 1, \ldots, n\), we thus have a decomposition \(q_i \otimes_k k_{w_j}\cong \langle 1, 1, 1, 1 \rangle \oplus h_{ij}\) for some rank four quadratic \(\Q_p\)-form \(h_{ij}\) with trivial discriminant and the same Hasse invariant as \(q_i \otimes_k k_{w_j}\).  By our choice of Hasse invariants, \(h_{ij}\) is \(\Q_p\)-anisotropic if and only if \(i = j\)~\cite{Serre:arithmetic}*{Theorem~6\,(iii), p.\,36}.  Since the form \(\langle 1, 1, 1, 1 \rangle\) is metabolic over \(\Q_p\) (we assumed \(p\) is odd), the form \(q_i\) has Witt index two at \(w_i\) and Witt index four at \(w_j\) for \(j \neq i\).  This shows that the group \(\mathbf{G}_i = \mathbf{Spin}(q_i)\) has \(\Q_p\)-rank two at \(w_i\) and \(\Q_p\)-rank four at \(w_j\) for \(j \neq i\).  The groups \(\mathbf{G}_i\) have moreover CSP by~\cite{Kneser:normalteiler}*{11.1}, so that as above we can conclude that \(\mathbf{G}(\C) \cong \mathbf{Spin_8}(\C)\) has \(n\) profinitely commensurable cocompact lattices which are not commensurable.

\subsection{The remaining real forms.} \label{subsection:real-forms}

Suppose \(G\) is locally isomorphic to $\alg{G}(\bbR)$, where $\alg{G}$ is a connected, simply connected absolutely almost simple algebraic $\bbR$-group.  Assume that $\alg{G}$ is neither of type \(A_m\), \(E_6\), nor isomorphic to \(\mathbf{Spin}(r,s)\).  Let $\alg{G}^{u}$ be the compact real form of $\alg{G}$.  Under these assumptions $\alg{G}^{u}$ is an inner form of $\alg{G}$ as we saw in Section~\ref{sec:overview}.  Let $\alg{G}_{\text{qs}}$ denote the unique quasi-split inner form of $\alg{G}$ and $\alg{G}^{u}$.

   Let \(k\) be of type~I.  By \cite[Proposition 1.10]{Borel-Harder:existence}, we can choose a quasi-split, absolutely simple, simply connected algebraic group $\alg{G}^0_{\text{qs}}$ over $k$ such that $\alg{G}^0_{\text{qs}} \times_k k_i \cong \alg{G}_{\text{qs}}$ for every $i \in \{1,2,\dots,n\}$.  Fix a nonarchimedean place \(w^0 \in V_f(k)\).  Then by \cite{Prasad-Rapinchuk:prescribed}*{Theorem~1}, for \(i = 1, \ldots, n\), there exists an inner \(k\)-twist \(\alg{G_i}\) of \(\alg{G}^0_{\text{qs}}\) such that
    \begin{itemize}
    \item \(\alg{G_i} \times_k k_i\) is isomorphic to \(\alg{G}\) while
    \item \(\alg{G_i} \times_k k_j\) is isomorphic to \(\alg{G}^{u}\) for \(j \neq i\) and
    \item \(\alg{G_i}\) is isomorphic to \(\mathbf{G}^0_{\text{qs}}\) at every finite place \(w \neq w^0\). 
    \end{itemize}

    Here, it is essential that \(\alg{G}\) and \(\alg{G}^{u}\) are inner twists of each other.  Over \(\mathfrak{p}\)-adic fields, there only exist a finite number of inner twists of a given absolutely simple group.  More precisely, in our context the non-abelian Galois cohomology \(H^1(k_{w^0}, \mathrm{Ad}(\mathbf{G_i})) \cong H^2(k_{w^0}, Z(\mathbf{G_i}))\) has cardinality at most four~\cite{Kneser:Galois}*{Satz~2 and table on p.\,254} because we assume \(\mathbf{G}\), hence \(\mathbf{G_i}\), is not of type \(A_m\).  Hence if \(n > 4(n'-1)\), the pigeon hole principle guarantees that at least \(n'\) of the groups \(\mathbf{G_1}, \ldots, \mathbf{G_n}\) are isomorphic at all finite places.  Without loss of generality, let us assume that the first \(n'\) groups \(\alg{G_1}, \ldots, \alg{G_{n'}}\) have this property; in particular, by Lemma \ref{lem:iso-adelic},
\begin{equation}\label{eq:iso-finite-adeles}
	\alg{G_1}(\bbA_k^f) \cong \alg{G_2}(\bbA_k^f) \cong \cdots \cong  \alg{G_{n'}}(\bbA_k^f).
\end{equation}

Pick arithmetic subgroups $\Gamma_i \subseteq \alg{G_i}(k)$.  The \(k\)-group \(\mathbf{G_i}\) is neither of type \(A_m\), nor \(E_6\), nor \(D_4\) (in which case it would be a spinor group) and we have \(\rank_{k_{i}} \mathbf{G_i}= \rank G \ge 2\).  Hence the congruence kernel \(C(k, \mathbf{G}_i)\) has order at most two by Theorem~\ref{thm:known-csp}.

One more time, by Theorem \ref{thm:toolbox} and Theorem~\ref{thm:superrigidity}, we obtain \(n'\) profinitely commensurable but non-commensurable cocompact lattices $\Gamma_1, \dots, \Gamma_{n'}$ in $\mathbf{G}(\R)$.

\subsection{The remaining complex forms.} \label{subsection:complex-forms}

Finally, let \(\mathbf{G}\) be a simply connected simple \(\C\)-group of type \(B_m\) (\(m \ge 2\)), \(C_m\) (\(m \ge 2\)), \(D_m\) (\(m \ge 5\)) or \(E_7\).  In the following, all uniqueness statements for algebraic groups are meant up to isomorphism over the field of definition of the group.  Let \(\mathbf{G}^u\) be the unique \(\R\)-anisotropic \(\R\)-group with \(\mathbf{G}^u \times_\R \C = \mathbf{G}\).   Let the \(\R\)-group \(\mathbf{G}_{\text{qs}}\) be the unique quasi split inner twist of \(\mathbf{G}^u\).  Finally, let \(\mathbf{G}^0\) be the unique \(\Q\)-split \(\Q\)-group with \(\mathbf{G}^0 \times_\Q \C = \mathbf{G}\).

Let \(k\) be of type~II.  Fix an odd prime \(p\) that splits in \(k\) and let \(w_1, \ldots, w_n\) be the places of \(k\) over \(p\).  By~\cite[Proposition 1.10]{Borel-Harder:existence}, we find a quasi-split, absolutely simple, simply connected algebraic group $\alg{G}^0_{\text{qs}}$ over $k$ which is isomorphic to \(\mathbf{G}_{\text{qs}}\) at all real places of \(k\) and is isomorphic to \(\mathbf{G}^0 \times \Q_p\) at \(w_1, \ldots, w_n\).  According to Kneser~\cite{Kneser:Galois}*{Satz~2 and table on p.\,254}, our assumption on the type of \(\mathbf{G}\) implies that there exists a non-trivial inner \(\Q_p\)-twist \(\mathbf{G}^p\) of \(\mathbf{G}^0 \times_\Q \Q_p\).  Fix another non-archimedian place \(w^0\) which does not lie over \(p\).  By~\cite{Prasad-Rapinchuk:prescribed}*{Theorem~1}, for each \(i = 1, \ldots, n\), there exists an inner \(k\)-twist \(\mathbf{G_i}\) of \(\mathbf{G}^0_{\text{qs}}\) such that
\begin{itemize}
\item \(\mathbf{G_i}\) is isomorphic to \(\mathbf{G}^u\) at every real place of \(k\).
\item \(\mathbf{G_i} \times_k k_{w_i}\) is isomorphic to \(\mathbf{G}^p\),
\item \(\mathbf{G_i} \times_k k_{w_j}\) is isomorphic to \(\mathbf{G}^0 \times_\Q \Q_p\) for \(j \neq i\),
\item \(\mathbf{G_i}\) is isomorphic to \(\mathbf{G}^0_{\text{qs}}\) at every finite place \(w \notin \{w^0, w_1, \ldots, w_n\}\).
\end{itemize}

By the same pigeon hole argument as above, we may assume that the first \(n'\) groups \(\mathbf{G_1}, \ldots, \mathbf{G_{n'}}\) are also isomorphic at \(w^0\).  Since \(p\) splits in \(k\), swapping any two places over \(p\) defines an automorphism of \(\mathbb{A}_k^f\).  It follows from the argument in Lemma \ref{lem:iso-adelic} that the groups \(\mathbf{G_1}(\bbA_k^f), \ldots, \mathbf{G_{n'}}(\bbA_k^f)\) are pairwise isomorphic as topological groups.  
Since CSP is known for \(\mathbf{G_i}\) by Theorem~\ref{thm:known-csp}, any arithmetic subgroups \(\Gamma_1, \ldots, \Gamma_{n'}\) of \(\mathbf{G}_1, \ldots, \mathbf{G}_{n'}\) are pairwise profinitely commensurable cocompact lattices in \(\mathbf{G}(\C)\) (Theorem \ref{thm:toolbox}) which are pairwise non-commensurable because \(k\) has no automorphism which could interchange the places \(w_1, \ldots, w_n\) (Theorem \ref{thm:superrigidity}).

\subsection{Non-cocompact lattices in special linear groups}
Our methods above exclusively produce cocompact lattices.  To complement this, we sketch a mechanism to come up with profinitely isomorphic, non-commensurable, non-cocompact lattices in special linear groups.
\begin{proposition}\label{prop:lattices-SL}
Let $G$ be either $\SL_m(\bbR)$, $\SL_m(\bbC)$, or $\SL_m(\bbH)$ 
where $m \geq 6$ is a composite number.
Then there are non-cocompact lattices $\Gamma_1, \Gamma_2 \subseteq G$ which are profinitely isomorphic but not commensurable.
\end{proposition}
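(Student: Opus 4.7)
My plan is to adapt the Prasad--Rapinchuk-style construction of \cite{Prasad-Rapinchuk:weakly-comm}*{Thm.~9.12}: produce two inner forms of \(\SL_m\) whose arithmetic groups are non-cocompact, profinitely commensurable lattices in \(G\) but are not abstractly commensurable. Take \(k = \bbQ\) when \(G \in \{\SL_m(\bbR), \SL_m(\bbH)\}\), and take \(k\) to be an imaginary quadratic field when \(G = \SL_m(\bbC)\). Since \(m \ge 6\) is composite, one can factor \(m = nd\) with \(n, d \ge 2\) (or \(2m = nd\) with \(n \ge 2\) and \(d\) even in the quaternionic case). Put \(G_i := \alg{SL}_n(D_i)\) for central division \(k\)-algebras \(D_1, D_2\) of degree \(d\) to be specified. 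Any arithmetic subgroup \(\Gamma_i \subseteq G_i(k)\) is, by Theorem~\ref{thm:toolbox}~\eqref{it:lattices}, a lattice in \(G_i(k \otimes_{\bbQ} \bbR) \cong G\); this lattice is non-cocompact because \(G_i\) is \(k\)-isotropic of \(k\)-rank \(n-1 \ge 1\).

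The crux is to select the pair \((D_1, D_2)\) so that (a) their invariants at the unique archimedean place realize the prescribed real form \(G\) at infinity; (b) at every finite place \(v\) of \(k\), one has \(D_1 \otimes_k k_v \cong D_2 \otimes_k k_v\) or \(D_1 \otimes_k k_v \cong (D_2 \otimes_k k_v)^{\textup{op}}\); and (c) globally \(D_1\) is not isomorphic to any of \(D_2,\, D_2^{\textup{op}},\, \bar{D}_2,\, \bar{D}_2^{\textup{op}}\), where bar denotes the conjugate under a nontrivial element of \(\Aut(k)\) (only relevant in the imaginary quadratic case). By Albert--Brauer--Hasse--Noether, such algebras are classified by tuples of local Hasse invariants in \(\tfrac{1}{d}\bbZ/\bbZ\) summing to zero; for a sufficiently large auxiliary set of finite places of \(k\), I can assign invariants that make \(D_2\) determined and \(D_1\) related to \(D_2\) by a \(\pm 1\)-pattern on these invariants which satisfies the sum-zero constraint but is neither the constant \(+1\), nor the constant \(-1\), nor a pattern induced by the Galois action. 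The hypothesis \(m \ge 6\) ensures that the factorization is flexible enough (either \(d \ge 3\), or \(d\) even with \(n \ge 2\)) for such combinatorics to exist.

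The classical transpose-inverse isomorphism \(\alg{SL}_n(A) \cong \alg{SL}_n(A^{\textup{op}})\) combined with~(b) produces local isomorphisms \(G_1 \times_k k_v \cong G_2 \times_k k_v\) at every finite place \(v\), so Lemma~\ref{lem:iso-adelic} yields a topological isomorphism \(G_1(\bbA_k^f) \cong G_2(\bbA_k^f)\). Since each \(G_i\) is an isotropic inner form of type~\(A\), CSP holds in the \(\bbQ\)-cases by Theorem~\ref{thm:known-csp}, and it is also known in the imaginary quadratic case from the general literature on CSP for \(k\)-isotropic type-\(A\) groups. Theorem~\ref{thm:toolbox}~\eqref{it:profinite-comm} then makes \(\widehat{\Gamma}_1, \widehat{\Gamma}_2\) commensurable as profinite groups, and passing to a common finite-index open subgroup gives two profinitely \emph{isomorphic} lattices. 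Non-commensurability follows from Margulis superrigidity (Theorem~\ref{thm:superrigidity}): commensurability would force \({}^{\sigma} G_1 \cong G_2\) over \(k\) for some \(\sigma \in \Aut(k)\), putting \(D_1\) among \(\{D_2, D_2^{\textup{op}}, \bar{D}_2, \bar{D}_2^{\textup{op}}\}\) and contradicting~(c). The main obstacle is the combinatorial local-invariant construction realizing (a)--(c) simultaneously, most delicately the strengthening of~(c) needed in the imaginary quadratic case to absorb the complex-conjugation automorphism of \(k\).
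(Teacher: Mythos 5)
Your high-level strategy --- build two $k$-isotropic inner forms $\alg{SL}_1(A),\ \alg{SL}_1(B)$ of $\SL_m$ from central simple algebras of the same reduced degree, match them at every finite place using the symmetry $\alg{SL}_1(A)\cong\alg{SL}_1(A^{\mathrm{op}})$, invoke CSP and strong approximation to get profinite commensurability, and use Margulis superrigidity plus $A\not\cong B, B^{\mathrm{op}}$ to rule out abstract commensurability --- is the same as the paper's. What differs is how the pair of algebras is produced. You propose to prescribe the Hasse invariants of degree-$d$ division algebras $D_1,D_2$ directly, requiring $a_1(v)=\pm a_2(v)$ at every finite $v$ but $a_1\notin\{\pm a_2,\pm\sigma^*a_2\}$ globally, and you correctly identify this assignment as the crux; you do not, however, verify that it is feasible. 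It is (for $d=3$ one may take $a_2=(1/3,1/3,2/3,2/3)$ and $a_1=(1/3,2/3,2/3,1/3)$ at four finite places, both summing to $0$ in $\tfrac{1}{3}\bbZ/\bbZ$ and neither equal to the other nor to its negative), but the paper's construction sidesteps the bookkeeping by a mixing trick: pick degree-$d$ division algebras $C,D$ with disjoint ramification and set $A,B$ equal to $M_n$ of the division parts of $C\otimes D$ and $C\otimes D^{\mathrm{op}}$; disjoint ramification makes the local $\pm$-pattern automatic, and $A\not\cong B, B^{\mathrm{op}}$ drops out because $[C],[D]$ have order $d\geq 3$ in $\mathrm{Br}(\bbQ)$. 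One inaccuracy you should fix: your flexibility remark ``either $d\geq 3$, or $d$ even with $n\geq 2$'' would permit $d=2$, but quaternion algebras are everywhere self-opposite, so for $d=2$ the local matching condition forces $D_1\cong D_2$ globally, and (b) and (c) are incompatible. You really need $d\geq 3$ in the $\bbR/\bbC$ cases and $d\geq 4$ even in the $\bbH$ case, both of which $m\geq 6$ composite supplies (write $m=ab$ with $a,b\geq 2$ and take $d=2b\geq 4$ for the quaternionic case). Your explicit attention to the Galois conjugate $\bar D_2$ over an imaginary quadratic field, which superrigidity forces you to exclude, is a legitimate point the paper leaves implicit; placing all ramification over primes inert in $k$ handles it.
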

\begin{proof}
Assume that $G = \SL_m(\bbR)$ where $m \geq 6$ is composite. We write
$m = dk$ with $d \geq 3$ and $k \geq 2$.
Let $C,D$ be two finite dimensional central division $\bbQ$-algebras of degree $d$, i.e., $\dim_\bbQ C = \dim_\bbQ D = d^2$. We assume that for every prime number $p$, at least one of the algebras $C_p = \bbQ_p \otimes_\bbQ C$
and $D_p = \bbQ_p \otimes_\bbQ D$ splits, this means, is isomorphic to $M_d(\bbQ_p)$. In addition, we assume that $\bbR \otimes_\bbQ C$ and $\bbR\otimes_\bbQ D$ are split. It follows from the theorem of Albert-Brauer-Hasse-Noether and the resulting description of the Brauer group $\mathrm{Br}(\bbQ)$ (see e.g. \cite[\S 18.5]{Pierce:Algebras}), that there are infinitely many such pairs of division algebras (it would be interesting to have concrete examples though). 

By construction, the central simple algebras $C \otimes_\bbQ D$ and $C\otimes_\bbQ D^{\text{op}}$ have index $d$ (see \cite[\S 18.6, Cor.]{Pierce:Algebras}), i.e.,
\[
	C \otimes_\bbQ D \cong M_d(E_1), \quad C\otimes_\bbQ D^{\text{op}} \cong M_d(E_2)
\]
for two division $\bbQ$-algebras $E_1, E_2$ of degree $d$.
We define two central simple $\bbQ$-algebras
\[
	A = M_k(E_1), \quad B = M_k(E_2).
\]
Since $d \geq 3$ is the order of $[C]$ and $[D]$ in $\mathrm{Br}(\bbQ)$ (see \cite[\S 18.6]{Pierce:Algebras}), we deduce that the algebras $C$ and $D$ are not isomorphic to their opposite algebras, that is $C \not\cong C^{\text{op}}$ and $D \not\cong D^{\text{op}}$.
It follows (using $[A] = [E_1] =[C][D]$ and $[B]= [E_2] = [C][D]^{-1}$ in $\mathrm{Br}(\bbQ)$) that $A$ is neither isomorphic to $B$ nor to $B^{\text{op}}$. Therefore, the associated reduced norm-one groups $\mathbf{G} = \SL_1(A)$ and $\mathbf{H}=\SL_1(B)$ are not isomorphic. In fact, reduced norm-one groups are isomorphic if and only if the underlying algebras are isomorphic or opposite isomorphic (see \cite[(26.9) and (26.11)]{BookOfInv}).
For every prime number $p$ we have $\mathbf{G}(\bbQ_p) \cong \mathbf{H}(\bbQ_p)$. Indeed, suppose $C$ splits at $p$, then $E_{1,p} \cong D_p$ and $E_{2,p}\cong D_p^{\text{op}}$, and consequently
$A_p = M_{k}(D_p) \cong B_p^{\text{op}}$.
If on the other hand $D$ splits at $p$, then $A_p \cong M_{k}(C_p) \cong B_p$.

Since $C$ and $D$ split over the real numbers, we have 
$\mathbf{G}(\bbR) \cong \mathbf{H}(\bbR) \cong \SL_{m}(\bbR)$. 
The groups $\mathbf{G}, \mathbf{H}$ are isotropic (because $k \geq 2$) and thus have the congruence subgroup property; see Theorem~\ref{thm:known-csp}.
As above, one can show that arithmetic subgroups of $\mathbf{G}$ and $\mathbf{H}$ are lattices in $\SL_{m}(\bbR)$ which are profinitely commensurable but not commensurable.
 
If we replace $\bbQ$ by an imaginary quadratic number field, the same construction yields profinitely isomorphic lattices in $\SL_{m}(\bbC)$. In order to obtain lattices in $\SL_m(\bbH)$, we vary the argument and write $2m = dk$ as a product of $k \geq 2$ and an even number $d \geq 4$.
We choose $C$, $D$ of degree $d$ as before, now assuming that $C$ is ramified over $\bbR$ and $D$ splits over $\bbR$.
\end{proof}

\section{Proof of Theorem~\ref{thm:rigidity}} \label{sec:proof-rigidity}

In this section we show that profinitely commensurable lattices in a connected simple complex Lie group \(G\) of type \(E_8\), \(F_4\), or \(G_2\) are abstractly commensurable.  Three features of \(G\) are used to conclude Theorem~\ref{thm:rigidity}: \(G\) is simply connected, has trivial center, and has no Dynkin diagram symmetries.  In particular, \(G\) is uniquely determined by its Lie algebra \(\mathfrak{g}\): we have \(G \cong \mathbf{G}(\C)\) for the linear algebraic \(\C\)-group \(\mathbf{G} = \Aut_\C(\mathfrak{g})\).

So let \(\Gamma_1\) and \(\Gamma_2\) be two profinitely commensurable lattices in~\(\mathbf{G}(\C)\).  We need to show that \(\Gamma_1\) is commensurable with \(\Gamma_2\).  By Margulis arithmeticity~\cite{Margulis:discrete-subgroups}*{Theorem~IX.1.11 and p.\,293/294}, for \(i=1,2\), there exists a dense number subfield \(k_i \subset \C\) whose remaining infinite places are real and there exists a simply connected absolutely almost simple \(k_i\)-group \(\mathbf{G_i}\) which is anisotropic at all real places of \(k_i\) such that for any \(k_i\)-embedding \(\mathbf{G_i} \subset \mathbf{GL_r}\) the group of \(k_i\)-integral points \(\mathbf{G_i}(\mathcal{O}_{k_i})\) is commensurable with \(\Gamma_i\).  Since commensurable groups are also profintely commensurable, it follows that \(\mathbf{G_1}(\mathcal{O}_{k_1})\) is profinitely commensurable with \(\mathbf{G_2}(\mathcal{O}_{k_2})\).  Consequently by \cite{Kammeyer:profinite-commensurability}*{Theorem~4}, \(k_1\) is arithmetically equivalent to \(k_2\).  From \cite{Chinburg-et-al:geodesics}*{Corollary~1.4}, we conclude that actually \(k_1\) is abstractly isomorphic to \(k_2\), hence the two subfields of \(\C\) are either equal or complex conjugates of one another.  Replacing \(k_2\) and \(\mathbf{G_2}\) with the complex conjugate field and group if needbe, we may assume \(k_1 = k_2 = k\).

Again owing to the exceptional type at hand, the groups \(\mathbf{G_i}\) have trivial center and no Dynkin diagram symmetries.  We thus have \(\Aut(\mathbf{G_i}) \cong \mathbf{G_i}\), so the \(k\)-isomorphism type of \(\mathbf{G_2}\) is classified by a class \(\alpha \in H^1(k, \mathbf{G_1})\) in non-commutative Galois cohomology with values in~\(\mathbf{G_1}\).  Since \(\mathbf{G_1}\) is isomorphic to \(\mathbf{G_2}\) at all infinite places of~\(k\), we see that \(\alpha\) reduces to the distinguished point in \(H^1(k_v, \mathbf{G_1})\) for all \(v \in V_\infty(k)\).  But by the main theorem of Galois cohomology of simply connected groups~\cite{Platonov-Rapinchuk:algebraic-groups}*{Theorem~6.6, p.\,289}, we have a bijection of pointed sets
\[ \theta \colon H^1(k, \mathbf{G_1}) \xrightarrow{ \ \cong \ } \prod_{v \in V_\infty(k)} H^1(k_v, \mathbf{G_1}). \]
So \(\alpha\) is the trivial twist, hence \(\mathbf{G_1}\) is \(k\)-isomorphic to \(\mathbf{G_2}\).  In particular, \(\mathbf{G_1}(\mathcal{O}_k)\) is commensurable with \(\mathbf{G_2}(\mathcal{O}_k)\), and so is \(\Gamma_1\) with \(\Gamma_2\).

\section{Proof of Theorem~\ref{thm:profinite-rigidity}} \label{sec:proof-profinite-rigidity}

Finally, in this section we give the proof of Theorem~\ref{thm:profinite-rigidity}.  Let \(\mathfrak{g}\) be the Lie algebra of \(G\).  The simple linear algebraic \(\R\)-group $\alg{G}_{\text{ad}} = \Aut_\bbR(\fg)$, satisfies $\alg{G}_{\text{ad}}(\bbR)^0 = G$.  Let $\widetilde{\alg{G}}$ be the simply connected covering of $\alg{G}_{\text{ad}}$.  We will construct profinitely isomorphic, non-isomorphic cocompact lattices $\Gamma_1,\dots, \Gamma_n \subseteq \widetilde{\alg{G}}(\bbR)$ which do not intersect the center and are thus isomorphic to lattices in $G$.
 
If $\widetilde{\alg{G}}$ is absolutely simple, then we take $k$ of type~I and set \(\alg{G} = \widetilde{\alg{G}}\).  If $\widetilde{\alg{G}}$ is not absolutely simple, then $\widetilde{\alg{G}} = \mathrm{Res}_{\bbC/\bbR}(\alg{G})$ for a simply connected simple algebraic $\bbC$-group $\alg{G}$.  In this case we choose $k$ of type~II.  Let $p$ be a rational prime number which splits completely in $k$ with finite places $w_1,\dots,w_n$ dividing $p$ and $k_{w_i} \cong \bbQ_p$ for all $i$.

By Theorem B in \cite{Borel-Harder:existence} there is a simply connected, absolutely simple algebraic $k$-group
$\alg{H}$ such that
\begin{itemize}
\item $\alg{H} \times_k k_1 \cong \alg{G}$,
\item $\alg{H}(k_j)$ is compact for all $j \geq 2$, and
\item $\alg{H}(k_{w_i}) \cong \alg{H}(k_{w_j})$ for all $i,j \in \{1,\dots,n\}$.
 \end{itemize}
If $G$ is of type $A_m$ (using $G \neq \PSL_m(\bbH)$) we can take $\alg{H}$ to be a special unitary group.  If $G$ is of type $D_4$, we can take $\alg{H}$ to be a spinor group.  By construction the group $\alg{H}$ then has the congruence subgroup property and we can find an arithmetic subgroup $\Gamma \subseteq \alg{H}(k)$ which does not intersect the congruence kernel, so that $\widehat{\Gamma} \cong \overline{\Gamma} \subseteq \alg{H}(\bbA_k^f)$.  Passing to a finite index subgroup if needbe, we may assume that $\Gamma$ intersects the center of $\mathbf{H}$ trivially, too.

We decompose the ring of finite adeles as $\bbA_k^{f} = \prod_{i=1}^n k_{w_i} \times \bbA_k^{f,p'}$ into the $p$- and $p'$-part.  We can find open compact subgroups $K_{0,i} \subseteq \alg{H}(k_{w_i})$ such that $K_{0,i} \cong K_{0,j}$ for all $i,j$ and such that 
\[
	\prod_{i=1}^n K_{0,i} \times K_f^{p'} \subseteq \overline{\Gamma}
\]
where $K_f^{p'}$ is some open compact subgroup of $\alg{H}(\bbA_k^{f,p'})$.
We choose open compact subgroups $K_{1,i} \subseteq K_{0,i}$ again such that
$K_{1,i} \cong K_{1,j}$ for all $i,j$ of sufficiently large index satisfying $|K_{0,i}:K_{1,i}| > |\alg{Z}(\alg{H})(k)|$.
Now we define the arithmetic groups
\[
	\Gamma_j = \Gamma \cap \prod_{i=1}^n K_{\delta_{i,j},i} \times K_f^{p'}
\]
where $\delta_{i,j}$ is the Kronecker-Delta. By construction $\widehat{\Gamma_j} \cong \prod_{i=1}^n K_{\delta_{i,j},i} \times K_f^{p'}$, hence the groups $\Gamma_1,\dots, \Gamma_n$ are profinitely isomorphic.  Being subgroups of $\Gamma$, they do not intersect the center.

It remains to prove that the arithmetic groups $\Gamma_1,\dots,\Gamma_n$ are pairwise non-isomorphic.  We explain this for $\Gamma_1$ and $\Gamma_2$.  Suppose for a contradiction that $\phi \colon \Gamma_1 \to \Gamma_2$ is an isomorphism.  As $\Gamma_2$ is a subgroup of $\alg{H}(k)$, it follows from Margulis superrigidity \cite[Theorem (5), p.5]{Margulis:discrete-subgroups} (using that $k$ has no automorphisms) that
there is an automorphism $\eta$ of $\alg{H}$ defined over $k$ and a homomorphism $\nu\colon \Gamma_1 \to \alg{Z}(\alg{H})(k)$ such that
\[
	\phi(\gamma) = \nu(\gamma) \eta(\gamma)
\]
for all \(\gamma \in \Gamma_1\).  Fix a Haar measure on $\alg{H}(k_{w_1})$.  Using the inclusion of $\Gamma_2$ into $\alg{H}(k_{w_1})$,  we see that the closure $\overline{\phi(\Gamma_1)}$ of $\phi(\Gamma_1)$ in $\alg{H}(k_{w_1})$ is $K_{0,1}$.  On the other hand, $\overline{\phi(\Gamma_1)}$ is contained in $\alg{Z}(\alg{H})(k)\eta(K_{1,1})$.  This can be used to derive a contradiction, since the Haar measure of the latter is strictly smaller than the Haar measure of $K_{0,1}$.  Recall that $\alg{H}(k_1)$ is unimodular \cite[I(2.2.3)]{Margulis:discrete-subgroups} and that the inner automorphism group of $\alg{H}$ has finite index in the automorphism group of $\alg{H}$.  We deduce that $\eta$ preserves the Haar measure and therefore 
\begin{align*}
\mathrm{vol}(K_{0,1})  &= \mathrm{vol}(\overline{\phi(\Gamma_1)}) \leq |\alg{Z}(\alg{H})(k)| \mathrm{vol}(\eta(K_{1,1}))\\
 &=  |\alg{Z}(\alg{H})(k)| \mathrm{vol}(K_{0,1}) |K_{0,1}:K_{1,1}|^{-1} < \mathrm{vol}(K_{0,1}) 
\end{align*}
which is a contradiction.

\subsection*{Funding}  This work was supported by the German Research Foundation, [DFG 338540207 (SPP 2026/1), DFG 441848266 (SPP 2026/2), and DFG 281869850 (RTG 2229)].

\subsection*{Acknowledgements} It is our pleasure to thank G.\,Mantilla Soler and S.\,K\"uhnlein for helpful discussions.  We are grateful for financial support from the German Research Foundation.

\begin{bibdiv}[References]

  \begin{biblist}

\bib{Aka:profinite}{article}{
   author={Aka, Menny},
   title={Profinite completions and Kazhdan's property (T)},
   journal={Groups Geom. Dyn.},
   volume={6},
   date={2012},
   number={2},
   pages={221--229},
   issn={1661-7207},
   review={\MR{2914858}},
   doi={10.4171/GGD/155},
}

\bib{Aka:arithmetic}{article}{
   author={Aka, Menny},
   title={Arithmetic groups with isomorphic finite quotients},
   journal={J. Algebra},
   volume={352},
   date={2012},
   pages={322--340},
   issn={0021-8693},
   review={\MR{2862189}},
   doi={10.1016/j.jalgebra.2011.10.033},
}
 
 \bib{Borel-Harish-Chandra}{article}{
   author={Borel, Armand},
   author={Harish-Chandra},
   title={Arithmetic subgroups of algebraic groups},
   journal={Ann. of Math. (2)},
   volume={75},
   date={1962},
   pages={485--535},
   issn={0003-486X},
   review={\MR{147566}},
   doi={10.2307/1970210},
}

\bib{Borel-Harder:existence}{article}{
   author={Borel, A.},
   author={Harder, G.},
   title={Existence of discrete cocompact subgroups of reductive groups over
   local fields},
   journal={J. Reine Angew. Math.},
   volume={298},
   date={1978},
   pages={53--64},
   issn={0075-4102},
   review={\MR{483367}},
}

\bib{BMRS}{article}{
   author={Bridson, M. R.},
   author={McReynolds, D. B.},
   author={Reid, A. W.},
   author={Spitler, R.},
   title={Absolute profinite rigidity and hyperbolic geometry},
   journal={Ann. of Math. (2)},
   volume={192},
   date={2020},
   number={3},
   pages={679--719},
   issn={0003-486X},
   review={\MR{4172619}},
   doi={10.4007/annals.2020.192.3.1},
}

\bib{Chinburg-et-al:geodesics}{article}{
   author={Chinburg, T.},
   author={Hamilton, E.},
   author={Long, D. D.},
   author={Reid, A. W.},
   title={Geodesics and commensurability classes of arithmetic hyperbolic
   3-manifolds},
   journal={Duke Math. J.},
   volume={145},
   date={2008},
   number={1},
   pages={25--44},
   issn={0012-7094},
   review={\MR{2451288}},
   doi={10.1215/00127094-2008-045},
 }
 
\bib{Deligne1978}{article}{
   author={Deligne, Pierre},
   title={Extensions centrales non r\'{e}siduellement finies de groupes
   arithm\'{e}tiques},
   language={French, with English summary},
   journal={C. R. Acad. Sci. Paris S\'{e}r. A-B},
   volume={287},
   date={1978},
   number={4},
   pages={A203--A208},
   issn={0151-0509},
   review={\MR{507760}},
}
 
\bib{Gille:kneser-tits}{article}{
   author={Gille, P.},
   title={Le probl\`eme de Kneser-Tits},
   language={French, with French summary},
   note={S\'eminaire Bourbaki. Vol. 2007/2008},
   journal={Ast\'erisque},
   number={326},
   date={2009},
   pages={Exp. No. 983, vii, 39--81 (2010)},
   issn={0303-1179},
   isbn={978-285629-269-3},
   review={\MR{2605318}},
 }

 \bib{Kammeyer:l2-invariants}{book}{
   author={Kammeyer, Holger},
   title={Introduction to $\ell^2$-invariants},
   series={Lecture Notes in Mathematics},
   volume={2247},
   publisher={Springer, Cham},
   date={2019},
   pages={viii+181},
   isbn={978-3-030-28296-7},
   isbn={978-3-030-28297-4},
   review={\MR{3971279}},
   doi={10.1007/978-3-030-28297-4},
}

\bib{Kammeyer:profinite-commensurability}{article}{
   author={Kammeyer, Holger},
   title={Profinite commensurability of $S$-arithmetic groups},
   journal={Acta Arith.},
   volume={197},
   date={2021},
   number={3},
   pages={311--330},
   issn={0065-1036},
   review={\MR{4194949}},
   doi={10.4064/aa200401-23-7},
 }

\bib{KKRS}{article}{
   author={Kammeyer, Holger},
   author={Kionke, Steffen},
   author={Raimbault, Jean},
   author={Sauer, Roman},
   title={Profinite invariants of arithmetic groups},
   journal={Forum Math. Sigma},
   volume={8},
   date={2020},
   pages={Paper No. e54, 22},
   review={\MR{4176758}},
   doi={10.1017/fms.2020.43},
 }

\bib{Kneser:Galois}{article}{
   author={Kneser, Martin},
   title={Galois-Kohomologie halbeinfacher algebraischer Gruppen \"{u}ber
   ${\germ p}$-adischen K\"{o}rpern. II},
   language={German},
   journal={Math. Z.},
   volume={89},
   date={1965},
   pages={250--272},
   issn={0025-5874},
   review={\MR{188219}},
   doi={10.1007/BF02116869},
}

\bib{Kneser:normalteiler}{article}{
   author={Kneser, Martin},
   title={Normalteiler ganzzahliger Spingruppen},
   language={German},
   journal={J. Reine Angew. Math.},
   volume={311(312)},
   date={1979},
   pages={191--214},
   issn={0075-4102},
   review={\MR{549966}},
   doi={10.1515/crll.1979.311-312.191},
}

\bib{BookOfInv}{book}{
   author={Knus, Max-Albert},
   author={Merkurjev, Alexander},
   author={Rost, Markus},
   author={Tignol, Jean-Pierre},
   title={The book of involutions},
   series={American Mathematical Society Colloquium Publications},
   volume={44},
   note={With a preface in French by J. Tits},
   publisher={American Mathematical Society, Providence, RI},
   date={1998},
   pages={xxii+593},
   isbn={0-8218-0904-0},
   review={\MR{1632779}},
   doi={10.1090/coll/044},
}

\bib{Landherr}{article}{
   author={Landherr, Walther},
   title={\"{A}quivalenz Hermitescher Formen \"{u}ber einem beliebigen algebraischen
   Zahlk\"{o}rper},
   language={German},
   journal={Abh. Math. Sem. Univ. Hamburg},
   volume={11},
   date={1935},
   number={1},
   pages={245--248},
   issn={0025-5858},
   review={\MR{3069658}},
   doi={10.1007/BF02940728},
}

\bib{Lubotzky:finiteness}{article}{
   author={Lubotzky, Alexander},
   title={Finiteness properties and profinite completions},
   journal={Bull. Lond. Math. Soc.},
   volume={46},
   date={2014},
   number={1},
   pages={103--110},
   issn={0024-6093},
   review={\MR{3161766}},
   doi={10.1112/blms/bdt070},
}

 \bib{Margulis:discrete-subgroups}{book}{
   author={Margulis, G. A.},
   title={Discrete subgroups of semisimple Lie groups},
   series={Ergebnisse der Mathematik und ihrer Grenzgebiete (3)},%[Results in Mathematics and Related Areas (3)]
   volume={17},
   publisher={Springer-Verlag, Berlin},
   date={1991},
   pages={x+388},
   isbn={3-540-12179-X},
   review={\MR{1090825}},
%   doi={10.1007/978-3-642-51445-6},
 }
 
 \bib{Milne-Suh}{article}{
   author={Milne, James S.},
   author={Suh, Junecue},
   title={Nonhomeomorphic conjugates of connected Shimura varieties},
   journal={Amer. J. Math.},
   volume={132},
   date={2010},
   number={3},
   pages={731--750},
   issn={0002-9327},
   review={\MR{2666906}},
   doi={10.1353/ajm.0.0112},
}

\bib{Pierce:Algebras}{book}{
	author = {Pierce, Richard S.},
	title = {Associative Algebras},
	series = {Graduate Texts in Mathematics},
	volume = {88},
	publisher = {Springer-Verlag, New York}
	date = {1982},
	pages = {xii+436},
	isbn = {987-1-4757-0165-4},
}

\bib{Platonov-Rapinchuk:algebraic-groups}{book}{
   author={Platonov, V.},
   author={Rapinchuk, A.},
   title={Algebraic groups and number theory},
   series={Pure and Applied Mathematics},
   volume={139},
   note={Translated from the 1991 Russian original by Rachel Rowen},
   publisher={Academic Press, Inc., Boston, MA},
   date={1994},
   pages={xii+614},
   isbn={0-12-558180-7},
   review={\MR{1278263}},
}

\bib{Prasad-Rapinchuk:computation}{article}{
   author={Prasad, G.},
   author={Rapinchuk, A. S.},
   title={Computation of the metaplectic kernel},
   journal={Inst. Hautes \'Etudes Sci. Publ. Math.},
   number={84},
   date={1996},
   pages={91--187 (1997)},
   issn={0073-8301},
   review={\MR{1441007}},
}

\bib{Prasad-Rapinchuk:developments-on-csp}{article}{
  author={Prasad, G.},
  author={Rapinchuk, A. S.},
  title={Developments on the congruence subgroup problem after the work of Bass, Milnor and Serre},
  note={e-print \arXiv{0809.1622}},
  year={2008},
}

\bib{Prasad-Rapinchuk:prescribed}{article}{
   author={Prasad, Gopal},
   author={Rapinchuk, Andrei S.},
   title={On the existence of isotropic forms of semi-simple algebraic
   groups over number fields with prescribed local behavior},
   journal={Adv. Math.},
   volume={207},
   date={2006},
   number={2},
   pages={646--660},
   issn={0001-8708},
   review={\MR{2271021}},
   doi={10.1016/j.aim.2006.01.001},
}

\bib{Prasad-Rapinchuk:weakly-comm}{article}{
   author={Prasad, Gopal},
   author={Rapinchuk, Andrei S.},
   title={Weakly commensurable arithmetic groups and isospectral locally
   symmetric spaces},
   journal={Publ. Math. Inst. Hautes \'{E}tudes Sci.},
   number={109},
   date={2009},
   pages={113--184},
   issn={0073-8301},
   review={\MR{2511587}},
   doi={10.1007/s10240-009-0019-6},
}

\bib{Reid:profinite-rigidity}{article}{
   author={Reid, Alan W.},
   title={Profinite rigidity},
   conference={
      title={Proceedings of the International Congress of
      Mathematicians---Rio de Janeiro 2018. Vol. II. Invited lectures},
   },
   book={
      publisher={World Sci. Publ., Hackensack, NJ},
   },
   date={2018},
   pages={1193--1216},
   review={\MR{3966805}},
}

\bib{Ribes-Zalesskii:profinite-groups}{book}{
   author={Ribes, Luis},
   author={Zalesskii, Pavel},
   title={Profinite groups},
   series={Ergebnisse der Mathematik und ihrer Grenzgebiete. 3. Folge. A
   Series of Modern Surveys in Mathematics [Results in Mathematics and
   Related Areas. 3rd Series. A Series of Modern Surveys in Mathematics]},
   volume={40},
   edition={2},
   publisher={Springer-Verlag, Berlin},
   date={2010},
   pages={xvi+464},
   isbn={978-3-642-01641-7},
   review={\MR{2599132}},
   doi={10.1007/978-3-642-01642-4},
}
		
\bib{Scharlau:quadratic-and-hermitian-forms}{book}{
   author={Scharlau, Winfried},
   title={Quadratic and Hermitian forms},
   series={Grundlehren der Mathematischen Wissenschaften [Fundamental
   Principles of Mathematical Sciences]},
   volume={270},
   publisher={Springer-Verlag, Berlin},
   date={1985},
   pages={x+421},
   isbn={3-540-13724-6},
   review={\MR{770063}},
   doi={10.1007/978-3-642-69971-9},
 }

 \bib{Serre:arithmetic}{book}{
   author={Serre, J.-P.},
   title={A course in arithmetic},
   note={Translated from the French;
   Graduate Texts in Mathematics, No. 7},
   publisher={Springer-Verlag, New York-Heidelberg},
   date={1973},
   pages={viii+115},
   review={\MR{0344216}},
 } 

\bib{Stover:PU}{article}{
   author={Stover, Matthew},
   title={Lattices in ${\rm PU}(n,1)$ that are not profinitely rigid},
   journal={Proc. Amer. Math. Soc.},
   volume={147},
   date={2019},
   number={12},
   pages={5055--5062},
   issn={0002-9939},
   review={\MR{4021068}},
   doi={10.1090/proc/14763},
}

\bib{Tits:classification}{article}{
   author={Tits, J.},
   title={Classification of algebraic semisimple groups},
   conference={
      title={Algebraic Groups and Discontinuous Subgroups},
      address={Proc. Sympos. Pure Math., Boulder, Colo.},
      date={1965},
   },
   book={
      publisher={Amer. Math. Soc., Providence, R.I., 1966},
   },
   date={1966},
   pages={33--62},
   review={\MR{0224710}},
}
		
\bib{Tits:Reductive}{article}{
   author={Tits, J.},
   title={Reductive groups over local fields},
   conference={
      title={Automorphic forms, representations and $L$-functions},
      address={Proc. Sympos. Pure Math., Oregon State Univ., Corvallis,
      Ore.},
      date={1977},
   },
   book={
      series={Proc. Sympos. Pure Math., XXXIII},
      publisher={Amer. Math. Soc., Providence, R.I.},
   },
   date={1979},
   pages={29--69},
   review={\MR{546588}},
}
		
\end{biblist}
\end{bibdiv}

\end{document}